\titlespacing*{\section}{0pt}{14pt}{4pt}
\titlespacing*{\subsection}{0pt}{8pt}{3pt}
\patchcmd{\ttlh@hang}{\parindent\z@}{\parindent\z@\leavevmode}{}{}
\patchcmd{\ttlh@hang}{\noindent}{}{}{}
\pgfplotsset{compat=newest}
\pgfplotsset{every axis/.append style={
                    axis x line=middle,
                    axis y line=middle,
                    axis line style={->},
                    xlabel style={at={(ticklabel* cs:1)},anchor=north west},
                 }}
\def\maketimestamp{\count255=\time
\divide\count255 by 60\relax
\edef\thetime{\the\count255:}%
\multiply\count255 by-60\relax
\advance\count255 by\time
\edef\thetime{\thetime\ifnum\count255<10 0\fi\the\count255}
\edef\thedate{\number\day-\ifcase\month\or Jan\or Feb\or Mar\or
             Apr\or May\or Jun\or Jul\or Aug\or Sep\or Oct\or
             Nov\or Dec\fi-\number\year}
\def\timstamp{\hbox to\hsize{\tt\hfil\thedate\hfil\thetime\hfil}}}
\numberwithin{equation}{section}  
\newtheorem{theorem}{Theorem}[section]
\newtheorem{lemma}[theorem]{Lemma}
\newtheorem{corollary}[theorem]{Corollary}
\theoremstyle{definition}
\theoremstyle{remark}
\newtheorem{remark}[theorem]{Remark}
\DeclareMathOperator{\Span}{span} %
\DeclareMathOperator*{\esssup}{ess\,sup} %
\DeclareMathOperator{\ft}{\mathcal{F}}
\DeclareMathOperator{\exponential}{e}
\newcommandtwoopt{\mixedS}[2][\cG][\cH]{S_{{#1},{#2}}} 
\newcommandtwoopt{\gaborG}[3][\Lambda][\Gamma]{\mathcal{G}(#3,#1,#2)} 
\newcommand{\myexp}[1]{\exponential^{#1}}
\newcommand*{\numbersys}[1]{\ensuremath{\mathbb{#1}}}
\newcommand*{\R}{\numbersys{R}}
\newcommand*{\Q}{\numbersys{Q}}
\newcommand*{\Z}{\numbersys{Z}}
\newcommand*{\cH}{\mathcal{H}}
\newcommand*{\cG}{\mathcal{G}}
\newcommand{\itvoc}[2]{\ensuremath{\left({#1},{#2}\right]}} 
\newcommand{\itvoo}[2]{\ensuremath{\left({#1},{#2}\right)}} %
\newcommand{\itvcc}[2]{\ensuremath{\left[{#1},{#2}\right]}} %
\newcommand{\itvco}[2]{\ensuremath{\left[{#1},{#2}\right)}} %
\newcommand{\itvccs}[2]{\ensuremath{\lbrack{#1},{#2}\rbrack}} %
\newcommand{\itvcos}[2]{\ensuremath{\lbrack{#1},{#2})}} %
\newcommand{\abs}[1]{\ensuremath{\left\lvert#1\right\rvert}}
\newcommand{\absbig}[1]{\ensuremath{\bigl\lvert#1\bigr\rvert}}
\newcommand{\norm}[2][]{\ensuremath{\left\lVert#2\right\rVert_{#1}}}
\newcommand{\innerprod}[3][]{\ensuremath{\left\langle #2,#3\right\rangle_{\! #1}}}
\newcommand{\set}[1]{\ensuremath{\left\lbrace{#1}\right\rbrace}}
\newcommand{\seq}[1]{\ensuremath{\left\lbrace{#1}\right\rbrace}}
\newcommand{\setprop}[2]{\ensuremath{\left\lbrace{#1} : {#2}\right\rbrace}}
\newcommand{\ceil}[1]{\left\lceil #1 \right\rceil}
\newcommand{\floor}[1]{\left\lfloor #1 \right\rfloor}
\newcommand*{\frameset}{\mathscr{F}}
\newcommand*\oline[1]{%
  \vbox{%
    \hrule height 0.5pt
    \kern0.25ex
    \hbox{%
      \kern-0.1em
      \ifmmode#1\else\ensuremath{#1}\fi
      \kern-0.1em
    }
  }
}
\newcommand{\rpar}{r}
\def\blfootnote{\xdef\@thefnmark{}\@footnotetext} 
\def\subjclass{\xdef\@thefnmark{}\@footnotetext}
\long\def\symbolfootnote[#1]#2{\begingroup%
\def\thefootnote{\fnsymbol{footnote}}\footnote[#1]{#2}\endgroup} 
  \renewenvironment{abstract}{%
      \titlepage
      \null\vfil
      \@beginparpenalty\@lowpenalty
      \begin{center}%
        \bfseries \abstractname
        \@endparpenalty\@M
      \end{center}}%
     {\par\vfil\null\endtitlepage}
  \renewenvironment{abstract}{%
      \if@twocolumn
        \section*{\abstractname}%
      \else
        \small
        \list{}{%
          \settowidth{\labelwidth}{\textbf{\abstractname:}}
          \setlength{\leftmargin}{50pt}
          \setlength{\rightmargin}{50pt}
          \setlength{\itemindent}{\labelwidth}
          \addtolength{\itemindent}{\labelsep}
        }
        \item[\textbf{\abstractname:}]

      \fi}
      {\if@twocolumn\else\endlist\fi}
\begin{document}

\title{On the non-frame property of
  Gabor systems with Hermite
  generators and the frame set conjecture}

\date{\today}

 \author{Andreas Horst\footnote{Technical University of Denmark, E-mail:
     \protect\url{ahor@dtu.dk}}\phantom{$\ast$}, Jakob Lemvig\footnote{Technical
     University of Denmark, Department of Applied Mathematics and Computer
     Science, Matematiktorvet 303B, 2800 Kgs.\ Lyngby, Denmark, ORCID:
     0000-0002-9338-7755, E-mail:
     \protect\url{jakle@dtu.dk}}\phantom{$\ast$},
Allan Erlang Videb\ae k\footnote{Veo Technologies, E-mail:
     \protect\url{allanerlang@gmail.com}}}

 \blfootnote{2010 {\it Mathematics Subject Classification.} Primary
   42C15. Secondary: 42A60}
 \blfootnote{{\it Key words and phrases.} Frame, frame set, Gabor
   system, Hermite functions, Zibulski-Zeevi matrix}

\maketitle

\thispagestyle{plain}
\begin{abstract}
  The frame set conjecture for Hermite functions formulated in
  \cite{Groechenigmystery2014} states that the Gabor frame set
  for these generators is the largest possible, that is, the time-frequency
  shifts of the Hermite functions associated with sampling rates
  $\alpha$ and modulation rates $\beta$ that avoid all known obstructions lead
  to Gabor frames for $L^{2}(\mathbb{R})$. By results in \cite{LyubarskiuiFrames1992,SeipDensity1992} and
  \cite{Lemvigsome2017}, it is known that the conjecture is true for the
  Gaussian, the $0$th order Hermite functions, and false for Hermite functions
  of order $2,3,6,7,10,11,\dots$, respectively. In this paper we disprove the
  remaining cases \emph{except} for the $1$st order Hermite function.
\end{abstract}

\section{Introduction}
\label{sec:introduction}
Given a function $g \in L^{2}(\R)$ and two positive parameters $\alpha$ and $\beta$,
the set of functions
$\mathcal{G}(g,\alpha,\beta):=\set{\myexp{2\pi i \beta m \cdot}g(\cdot-\alpha k)}_{k,m\in \Z}$
is said to be a Gabor frame for $L^2(\R)$ if there exist constants $A,B>0$, called
frame bounds, such that
\begin{equation*}
A \norm{f}^2 \le \sum_{k,m \in \Z} \abs{\innerprod{f}{\myexp{2\pi i
      \beta m
      \cdot}g(\cdot- \alpha k)}}^2 \le B \norm{f}^2 \quad \text{for all } f
\in L^2(\R). 
\end{equation*}
We refer to \cite{Christensenintroduction2016,MR1843717}
for an introduction to frames and Gabor analysis.

The Gabor frame set, or simply the \emph{frame set}, of a window function
$g \in L^2(\R)$, denoted by $\frameset{(g)}$, is the set of tuples of sampling
and modulation parameters $(\alpha,\beta)\in\mathbb{R}_{>0}^2$ for which the
associated Gabor system $\mathcal{G}(g,\alpha,\beta)$ is a frame for $L^2(\R)$.
The Gabor frame set conjecture for Hermite
functions~\cite{Groechenigmystery2014} states that the frame set for Hermite
functions of even orders is $\{(\alpha,\beta) \in \R^2_{>0}: \alpha\beta <1\}$
and for odd orders is $\{(\alpha,\beta) \in \R^2_{>0}: \alpha\beta <1 \text{ and
} \alpha\beta \neq 1/2, 2/3, \dots\}$, where the Hermite functions is given by
\begin{equation}
    \label{eq:def_hn}
    h_n(x) = (-1)^n (c_n)^{-1/2} \myexp{\pi x^2} \left(\frac{d^n}{dx^n} \myexp{-2\pi x^2}\right)
\end{equation}
for $c_n := (2\pi)^n 2^{n-1/2} n!$ for $n \in \Z_{\ge 0}$. The frame set
conjecture is true for the Gaussian case $n=0$ as proved by
Lyubarskii~\cite{LyubarskiuiFrames1992} and Seip and
Wallst\'en~\cite{SeipDensity1992,SeipDensity1992a}, but false for orders
$n=4m+2$ and $n=4m+3$ for all non-negative integers $m \in \Z_{>0}$ as proved by
the second-named author~\cite{Lemvigsome2017}. Numerical simulations in
\cite{Lemvigsome2017} suggest that the conjecture is also false for the two
cases $n=4$ and $n=5$, but no proof is provided, and the cases
$n=1,4,5,8,9,12,13,\dots$, i.e., $n=4m \, (m>0)$ and $n=4m+1 \, (m \ge 0)$ are
still open. The goal of this work is to disprove the remaining open cases $n=4m$
and $n=4m+1$, where $m \in \Z_{>0}$, except $n=1$. This will, in turn, show that
the frame set conjecture for Hermite functions is false for all orders $n \ge 2$
and that the frame set for Hermite functions with two or more zeros necessarily
are more mysterious than originally believed. We remark that we do not shed new
light on the frame set conjecture for the first order Hermite function $h_1$ due
to Lyubarskii and Nes~\cite{MR3027914}.

One insight from~\cite{Lemvigsome2017} was to split the Gabor frame set problem
for Hermite functions into four subproblems depending on the eigenvalue of the
window function with respect to the Fourier transform on $L^2(\R)$. Recall that
the Hermite functions $\setprop{h_{4m+\ell}}{m \in \Z_{\ge 0}}$ has eigenvalue
$\lambda=(-i)^\ell$ for integers $\ell = 0,1,2,3$ with respect to the Fourier
transform. The methods used in \cite{Lemvigsome2017} to disprove the frame set
conjecture for Hermite functions of order ${4m+2}$ and ${4m+3}$,
$m \in \Z_{\ge 0}$, are not specific to the Hermite functions, but works for any
sufficiently nice eigenfunction of the Fourier transform with eigenvalue either
$\lambda=-1$ or $\lambda=i$. However, these methods cannot be modified to also
disprove the remaining cases. This is obvious from the fact that the methods
only rely on eigenvalues with respect to the Fourier transform of the window:
since $h_0$ and $h_{4m}$, $m \in \Z_{>0}$, have the same eigenvalue
($\lambda=1$), and the frame set conjecture is true for the Gaussion $h_0$, one
cannot use methods only relying on the eigenvalue to disprove the conjecture for
$h_{4m}$, $m \in \Z_{>0}$. The methods developed in this work are specific to
the Hermite functions and relies crucially on a number of properties of the
Hermite functions, in particular, on the existence and approximate location of
positive zeros.

We will, in fact, give many counterexamples to each conjecture and, similar to
\cite{Lemvigsome2017}, the counterexamples appears on hyperbolas
$\alpha \beta=1/2, \alpha \beta=1/3, \alpha \beta=1/4$ and $\alpha \beta=2/3$.
Gabor systems $\mathcal{G}(g,\alpha,\beta)$ with $\alpha\beta \in \Q$ are called
rationally oversampled systems, and their frame property can be completely
characterized by the Zak transform and the Zibulski-Zeevi
matrix~\cite{ZibulskiAnalysis1997}. Thus, it is no surprise that our
counterexamples are based on properties of the Zak transform of Hermite
functions. However, the way we will study these properties is non-standard in
Gabor analysis as we will fix the time and frequency variable of the Zak
transform and consider the modular parameter as a variable. Moreover, in
\cite{Lemvigsome2017} the location $(\alpha,\beta)$ of the counterexamples for
all sufficiently nice functions in the two eigenspaces of the Fourier transform
was fixed and $\alpha,\beta \in \itvcc{0.5}{1.16}$. In this work, contrary to
\cite{Lemvigsome2017}, the location $(\alpha, \beta)$ of the counterexamples on
the hyperbola depends on the order $n$ of the Hermite function $h_{n}$, and we
show that $\alpha$ and $\beta$ grow as $n^{1/2}$ and $n^{-1/2}$ and vice versa
(up to specified constants). Our techniques provide new obstructions of the
frame property, not only for the open cases, but for all orders $n \ge 3$. We
need to restrict our attention to orders greater than or equal to three as our
methods rely on at least two non-negative zeros of the window function.

\subsection{Outline of the paper}
In Section~\ref{sec:hermite-functions} we improve on a lower bound of the
largest zero of the Hermite functions by Szeg\"o~\cite{MR0106295}. In
Section~\Ref{sec:ZakTransform} we introduce the Zak transform $Z_{\lambda}$ as a
unitary transform of $L^{2}(\R)$ onto $L^{2}(\itvco{0}{1}^{2})$ and explain its
role in the frame set conjecture. In Section~\ref{sec:zeros-symmetries-zak} we
turn to zeros and symmetries of the Zak transform. We recall some known results
on even and odd functions in Section~\ref{sec:symmetries-va}, and, in
Section~\ref{sec:obstr-frame-prop}, how zeros of the Zak transform lead to
obstructions of the frame property. In Section~\ref{sec:symmetries-par} we prove
a symmetry property of the auxiliary function
$\lambda \mapsto Z_{s\lambda}h_{n}\bigl(\frac{x_0}{s^{2}},\gamma_0\bigr)$ for
certain fixed values of $x_0$ and $\gamma_0$ with $s^2=2,3,4$. More precisely,
we will prove that the same symmetry property will hold for \emph{one}
$\gamma_0$ value (either $0$ or $1/2$) and for $s^2$ different $x_0$-values each
separated by $1/s^2$. The symmetry property can be seen as a \emph{pointwise}
form of the modular characteristics in the sense of theta functions and states
(see Theorem~\Ref{thm:symm_modu_even_n} and~\Ref{thm:symm_modu_odd_n}) that
$\kappa \mapsto Z_{s2^\kappa}h_{n}\bigl(\frac{x_0}{s^{2}},\gamma_0\bigr)$ is,
depending on the value of $n$, an odd or even (continuous) function on $\R$. The
auxiliary function is of interest since any of its zeros lead to obstructions of
the frame properties of Gabor systems generated by $h_{n}$. Indeed, the known
counterexamples from \cite{Lemvigsome2017} for the frame set conjecture for
Hermite functions of order ${4m+2}$ and ${4m+3}$, $m \in \Z_{\ge 0}$, follow
immediately from these properties. To disprove the remaining cases we need, for
any $x_{0} \in [-1/4,1/4]$, the existence of zeros of
$\kappa \mapsto Z_{s2^\kappa}h_{n}\bigl(\frac{x_0}{s^{2}},\gamma_0\bigr)$, where
$\gamma_{0}$ is $0$ for odd $n$ and $1/2$ for even $n$, and $n \ge 3$. This
existence is proved in Section~\ref{sec:additional-zeros-zak}. Finally, in
Section~\ref{sec:counterexamples} we give the counterexamples of the frame set
conjecture for Hermite functions.

\subsection{Related works}

The study of Gabor systems generated by Hermite functions is closely connected
to coherent states associated with higher Landau levels. For example, Abreu et
al.~\cite{MR3424534} demonstrate how properties of Gabor systems with Hermite
windows can be identified with aspects of quantum mechanics, specifically the
behavior of a charged particle in a constant homogeneous magnetic field.
Similarly, the results presented in this paper illustrate instances of dense
superpositions of generating states in higher Landau levels, where the energy
can be made arbitrarily small. One celebrated illustration of this interplay is
the Quantum Hall Effect (QHE), which has led to several Nobel Prizes in physics
and chemistry since the experimental discovery of the \emph{integer} QHE by
von~Klitzing in 1980. The \emph{integer} QHE can be explained by the formation
of Landau levels, where the integers correspond to the order of Hermite
functions. Notably, von~Klitzing~\cite{WOS:A1986D812000001} used the wave
function proposed by Laughlin~\cite{Laughlin1981}, as explicitly stated in
equation (5) of these seminal works, which corresponds precisely to a Gabor
system with Hermite function generators. For a deeper exploration of these
connections, we refer to \cite{WOS:000235953900002,MR3203099, MR3424534} and the
references therein.

Due to the close relationship between quantum states of the quantum harmonic
oscillator and Gabor analysis with Hermite functions, similar ideas have emerged
in both fields. The first study of the zeros of the Zak transform of Hermite
functions appeared in a physics context, in the work by Boon, Zak, and Zucker
\cite{WOS:A1983QC19800014,WOS:A1981LU63700026}\footnote{These papers have
  largely been unknown to the frame theory community; see \cite{MR1757401} for a
  rare exception. We were made aware of the connection between our work and
  these papers by one of the referees.}. On page 320 of
\cite{WOS:A1983QC19800014}, the authors show that for rational values of
$\alpha\beta < 1$, the Gabor system $\mathcal{G}(h_{n},\alpha,\beta)$ is
complete in $L^2(\mathbb{R})$. This result was recently rediscovered and
extended to a larger class of window functions in
\cite{GroechenigCompleteness2016}. Additionally, Boon, Zak, and Zucker
\cite{WOS:A1983QC19800014} proved results regarding the zeros of the Zak
transform of Hermite functions, which were later rediscovered and generalized by
the second author \cite{Lemvigsome2017}. The counterexamples to the Gabor frame
set conjecture for Hermite functions in \cite{Lemvigsome2017} can be derived
from the results in \cite{WOS:A1983QC19800014} combined with the Zibulski-Zeevi
characterization of Gabor frames~\cite{ZibulskiAnalysis1997}. We note that the
current work does not overlap with
\cite{WOS:A1983QC19800014,WOS:A1981LU63700026}; see Remark~\ref{rem:four-zeros}.

We conclude the introduction with a brief summary of positive results towards
characterizing the Gabor frame set of Hermite functions. It is well-known that
the Gabor system $\mathcal{G}(h_{n},\alpha,\beta)$ satisfies the upper frame
bound and, as mentioned above, it is complete in $L^2(\mathbb{R})$ for rational
$\alpha\beta \le 1$. Gröchenig and
Lyubarskii~\cite{GroechenigGabor2009,GroechenigGabor2007a} showed that the Gabor
system $\mathcal{G}(h_{n},\alpha,\beta)$ forms a frame if
$\alpha\beta < \frac{1}{n+1}$. More recently, Ghosh and
Selvan~\cite{ghosh2023gabor} conducted a numerical study of the frame set of
Hermite functions, as well as other functions, using a connection to sampling
theory in shift-invariant spaces.

\section{Hermite functions}
\label{sec:hermite-functions}

Hermite functions arise in many different
contexts, e.g., as eigenfunctions of both the Hermite operator
$H=-\frac{d^2}{dx^2}+(2\pi x)^2$ and the Fourier transform: 
\[ 
 \hat{h}_n(\gamma) = (-i)^n h_n(\gamma)  \quad a.e.\ \gamma \in \R.
\] 
Here, the Fourier transform is defined for $f \in L^1(\R)$ by 
\[
\ft
f(\gamma)=\hat f(\gamma) = \int_{\R} f(x)\myexp{-2 \pi i
  \gamma x} \mathrm{d}x
  \]
with the usual extension to $L^2(\R)$. We let
$E_\ell = \overline{\Span}\setprop{h_{4m+\ell}}{m \in \Z_{\ge 0}}$ $\subset L^2(\R)$, $\ell=0,1,2,3$,
denote the eigenspace of the Fourier transform corresponding to the
eigenvalue $(-i)^\ell$.

Since the Fourier transform is a unitary operator, it preserves the
frame property. Moreover, since the Fourier transform switches the role of the sampling
and modulation parameter, we see that the system $\gaborG[\alpha][\beta]{g}$ is a frame if and
only if the Fourier transform of the system $\gaborG[\beta][\alpha]{\hat{g}}$
is a frame. As a consequence, we immediately have the following simple, but useful
result showing that $\frameset{(h_{n})}$ is symmetric about the line $\alpha=\beta$.
\begin{lemma}
  \label{lem:frame-set-symmetry-h_n}
  Let $\alpha,\beta>0$ and $A,B>0$. The Gabor system $\gaborG[\alpha][\beta]{h_{n}}$
  is a frame with bounds $A$ and $B$ if, and only if,  the Gabor system
  $\gaborG[\beta][\alpha]{h_{n}}$ is a frame with bounds $A$ and $B$.
\end{lemma}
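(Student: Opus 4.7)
The plan is exactly the argument sketched in the paragraph preceding the lemma: reduce the frame property of $\gaborG[\beta][\alpha]{h_{n}}$ to that of $\gaborG[\alpha][\beta]{h_{n}}$ by conjugating with the Fourier transform. First, I would apply Parseval to each inner product to replace $\innerprod{f}{\modu[\beta m]\tran[\alpha k] h_{n}}$ by $\innerprod{\ft f}{\ft(\modu[\beta m]\tran[\alpha k] h_{n})}$. Using the standard commutation relations $\ft \tran[a] = \modu[-a] \ft$, $\ft \modu[b] = \tran[b] \ft$, and $\tran[a] \modu[b] = \myexp{-2\pi i ab} \modu[b] \tran[a]$, together with the eigenfunction identity $\ft h_{n} = (-i)^{n} h_{n}$, a direct computation yields
\[
\ft(\modu[\beta m] \tran[\alpha k] h_{n}) = (-i)^{n}\, \myexp{2\pi i \alpha\beta km}\, \modu[-\alpha k] \tran[\beta m] h_{n},
\]
and the unimodular prefactor drops out after taking $\abs{\cdot}^{2}$.

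Second, I would reindex the resulting sum. The middle term of the frame inequality becomes
\[
\sum_{k,m \in \Z} \bigl|\innerprod{\ft f}{\modu[-\alpha k] \tran[\beta m] h_{n}}\bigr|^{2},
\]
and the substitution $k \mapsto -k$ followed by a swap of the dummy indices $k \leftrightarrow m$ converts this into the Gabor frame sum for $\gaborG[\beta][\alpha]{h_{n}}$ evaluated at $\ft f$. Since $\ft$ is a unitary bijection of $L^{2}(\R)$ with $\norm{\ft f} = \norm{f}$, the two-sided bound $A\norm{f}^{2} \le \cdots \le B\norm{f}^{2}$ for $\gaborG[\alpha][\beta]{h_{n}}$ at $f$ is equivalent to the same bound for $\gaborG[\beta][\alpha]{h_{n}}$ at $\ft f$; as $f$ ranges over $L^{2}(\R)$ so does $\ft f$, which yields both implications.

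There is no real obstacle: the argument is a mechanical verification. The only bookkeeping required is to keep track of the commutation phase $\myexp{-2\pi i ab}$ between translations and modulations (together with the eigenvalue $(-i)^{n}$), all of which are unimodular, and to handle the sign flip coming from $\ft \tran[a] = \modu[-a] \ft$, which produces $\modu[-\alpha k]$ rather than $\modu[\alpha k]$ and is absorbed by the reindexing $k \mapsto -k$.
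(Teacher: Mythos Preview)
Your proposal is correct and is exactly the argument the paper has in mind: the paper does not give a separate proof of the lemma but simply notes in the preceding paragraph that the Fourier transform is unitary and swaps the roles of $\alpha$ and $\beta$, together with $\hat{h}_{n}=(-i)^{n}h_{n}$. You have simply written out the bookkeeping (commutation phases, sign flip, reindexing) that the paper leaves implicit.
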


\subsection{The Hermite polynomials and location of their zeros}

The Hermite functions $h_n$ defined in~\eqref{eq:def_hn} and the Hermite polynomials $H_n$ defined by
\[
  H_{n}(x)=(-1)^{n}e^{x^{2}}{\frac {d^{n}}{dx^{n}}}e^{-x^{2}}, \qquad
  n \in \Z_{\ge 0},
\]
are related by $h_n(x)= d_n H_n(\sqrt{2\pi}x) \myexp{-\pi x^2}$, where $d_n$ is a
positive constant. A few of the Hermite functions $h_{n}$ of interest in this
work are plotted in Figure~\ref{fig:hermite-funs}. In particular, the zeros of the Hermite functions
can be determined by a simple scaling of the roots of the Hermite polynomials.
\begin{figure}
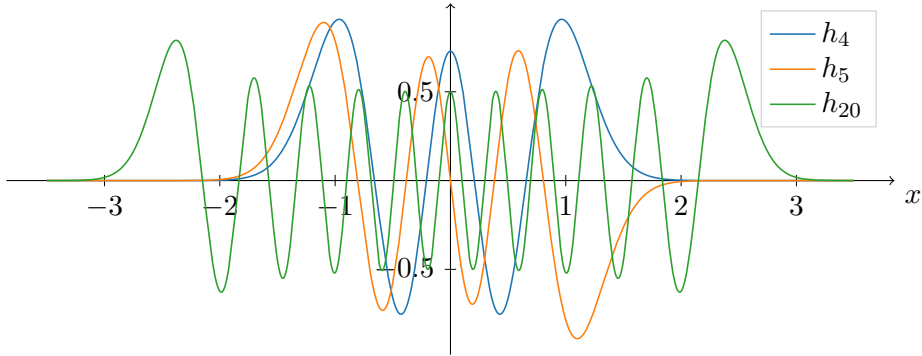

  \centering



\caption{The graph of $h_4$, $h_5$, and $h_{20}$.}
\label{fig:hermite-funs}
\end{figure}

Since $\setprop{H_n(x)}{n=1,2,\dots}$ are orthogonal polynomials with respect
to the Gaussian weight $\myexp{-x^2}$, the Hermite polynomial $H_n(x)$ has $n$
real and simple roots. We denote these roots by $x_1,x_2, \dots, x_n$ in
descending order so that $x_n < x_{n-1} < \dots < x_1$. It is well-known that
all zeros belong to the interval $\itvcc{-\sqrt{2n+1}}{\sqrt{2n+1}}$.
By symmetry of the weight function, the Hermite polynomials $H_n(x)$ are even
and odd functions for $n$ even and odd, respectively. It follows that
$x_k = -x_{n-k}$ for all $k=1,2,\dots,n$, and we therefore restrict our
attention to zeros on the positive real line.

The properties of zeros of the classical orthogonal polynomials are of interest
in many different areas of mathematics, e.g., in quadrature formulas, but also
in applications such as spherical designs \cite{MR3151649}. One classical and powerful tool to
study zeros of orthogonal polynomials is Sturm's comparison theorems for second
order differential equations~\cite[Thm. 1.82.2]{MR0106295}. Using this theorem, it can be shown, see \cite[(6.31.15)]{MR0106295}, that the
zeros are convex in the sense that the distance between two consecutive roots
increases as we move away from the origo $x=0$. To be precise, for three
consecutive zeros $x_{k+1} < x_{k} < x_{k-1}$, where $1< k \le \floor{n/2}$, we
have $x_{k} - x_{k+1} < x_{k-1} - x_{k}$.

The location of the zeros is often formulated as asymptotic estimates for
$n \to \infty$, however, we will need bounds on the location of the roots that
also hold for small values of $n$. Some bounds are well-known, but mainly upper
bounds and mainly of $x_1$. We will need explicit lower bounds only depending on
$n$ and $k$. Szeg\"o~\cite{MR0106295} proves using Laguerre's Theorem on the
roots of polynomials the lower bound $x_1 > \sqrt{(n-1)/2}$ on the largest root,
see page 130 in \cite[(6.32.6)]{MR0106295}. We will need a sharper bound to avoid
handling a number of special cases for small $n$. Its proof is simple and
only relies on the convexity of roots and the well-known fact that the square of
the Hermite roots sum to $\frac{n(n-1)}{2}$.

     \begin{lemma}
     \label{lem:x1-simple-bound}
Let $n \ge 2$ be an integer. The largest root $x_1$ of $H_n(x)$ satisfy the lower bound
    \begin{equation}
        \label{eq:x1-simple-lower-bound}
        x_1 > \sqrt{3/2} \frac{n-1}{\sqrt{n+1}}.
    \end{equation}
\end{lemma}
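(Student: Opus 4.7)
The strategy is to reduce the bound to a pointwise estimate on each positive root in terms of $x_1$, and then square-and-sum using $\sum_{k=1}^{n} x_k^{2}=\tfrac{n(n-1)}{2}$. Write $m=\lfloor n/2\rfloor$ and let $x_1>x_2>\cdots>x_m>0$ be the positive roots. By the symmetry $x_{n+1-k}=-x_k$, these contribute exactly half the sum of squares, giving $\sum_{k=1}^{m}x_k^{2}=\tfrac{n(n-1)}{4}$. Extend the sequence by setting $x_{m+1}:=0$ when $n$ is odd and $x_{m+1}:=-x_m$ when $n$ is even; in either case $x_{m+1}$ is a genuine root of $H_n$. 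The convexity of consecutive gaps recalled just above (for $1<k\le m$) then makes the extended sequence $x_1,\dots,x_{m+1}$ strictly convex as a function of its index.

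The key inequality I aim to prove is
\[
x_k<c_k\,x_1,\qquad c_k:=1-\frac{2(k-1)}{n-1},\qquad k=2,\dots,m.
\]
Here $c_k$ is the arithmetic progression matching the endpoints of the normalized root sequence $r_k:=x_k/x_1$: one checks $c_1=r_1=1$, while $c_{m+1}=0$ (odd $n$) or $c_{m+1}=-c_m$ (even $n$), paralleling $r_{m+1}$. The defect $d_k:=c_k-r_k$ is then strictly concave, since the arithmetic $c$ has vanishing second differences while the strictly convex $r$ contributes positive second differences with a sign flip; and $d_1=0$. For odd $n$ the extended defect also satisfies $d_{m+1}=0$, and strict concavity between two zero endpoints immediately yields $d_k>0$ for $k=2,\dots,m$. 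For even $n$ I instead work on $\{1,\dots,m\}$ and verify $d_m>0$, equivalently $r_m<1/(2m-1)$: the $m$ strictly decreasing gaps of the extended $r$-sequence have smallest value $g_m=2r_m$ and average $(1+r_m)/m$, forcing $2r_m<(1+r_m)/m$, which rearranges to $r_m<1/(2m-1)$. With $d_1=0$ and $d_m>0$, concavity-above-chord then delivers $d_k>0$ for all intermediate $k$.

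Squaring the pointwise bound (legal since $c_k>0$ for $k\le m$ makes both sides positive) and summing, one obtains $\sum_{k=1}^{m}x_k^{2}<x_1^{2}\sum_{k=1}^{m}c_k^{2}$. A short calculation, which unifies the two parities, yields the clean identity
\[
\sum_{k=1}^{m}c_k^{2}=\frac{n(n+1)}{6(n-1)}.
\]
Combining with $\sum_{k=1}^{m}x_k^{2}=\tfrac{n(n-1)}{4}$ and rearranging gives $x_1^{2}>\tfrac{3(n-1)^{2}}{2(n+1)}$, which is the claimed \eqref{eq:x1-simple-lower-bound}. The main technical hurdle I anticipate is the parity split in the concavity argument, specifically extracting $r_m<1/(2m-1)$ for even $n$ from gap monotonicity of the reflected extension rather than from a direct defect-endpoint identity; once this is in hand, the rest is bookkeeping.
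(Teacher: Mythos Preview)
Your proposal is correct and follows essentially the same route as the paper: the same pointwise bound $x_k<c_k x_1$ with $c_k=(n-2k+1)/(n-1)$, the same closed form $\sum_{k=1}^{m}c_k^2=\frac{n(n+1)}{6(n-1)}$, and the same combination with $\sum_{k=1}^{m}x_k^2=\frac{n(n-1)}{4}$. The only difference is that the paper dispatches the pointwise bound in one line (``by the convexity of the roots''), whereas you spell out the discrete-concavity argument for the defect $d_k=c_k-r_k$, including the parity split and the endpoint estimate $r_m<1/(2m-1)$ for even $n$; this extra care is sound and arguably fills in what the paper leaves implicit.
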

\begin{proof}
  Let $n \ge 2$. We divide the interval $\itvcc{x_{n}}{x_{1}}$, that is, $\itvcc{-x_{1}}{x_{1}}$, into $n-1$
  uniform intervals of length $2x_{1}/(n-1)$. By the convexity of the roots, we
  then have
  \begin{equation}
    x_{k} < x_{1} - (k-1)\frac{2x_{1}}{n-1} = \frac{n-2k+1}{n-1} x_{1}
    \label{eq:root-convexity-estimate}
  \end{equation}
  for $k=1,\dots,\floor{n/2}$. Thus, by~\eqref{eq:root-convexity-estimate}, we
  can estimate:
  \begin{align*}
    \sum_{k=1}^{\floor{n/2}} x_{k}^{2} < \frac{x_{1}^{2}}{(n-1)^{2}} \sum_{k=1}^{\floor{n/2}} (n-2k+1)^{2} = \frac{x_{1}^{2}}{(n-1)^{2}} \frac{n}{6} (n^{2}-1) = x_{1}^{2}\, \frac{n(n+1)}{6(n-1)}
  \end{align*}
  Now, using that the positive zeros of $H_{n}(x)$ satisfy, see \cite[eq.~(5.5.4) and p.~142]{MR0106295}, the relation
  \[
    \sum_{k=1}^{\floor{n/2}} x_{k}^{2} = \frac{n(n-1)}{4},
  \]
  we arrive at~\eqref{eq:x1-simple-lower-bound}.
\end{proof}

The lower bound \eqref{eq:x1-simple-lower-bound} improves on
$x_1 >  \sqrt{(n-1)/2}$ for all $n\ge 2$ and asymptotically by a factor of $\sqrt{3}$.
The upper bound of $x_{k}$ can be improved to be
$x_{k} \le \sqrt{2n - 2} \cos \tfrac{(k-1)\pi}{n-1}$, cf.~\cite{MR1858270}. We refer to \cite{MR1909981} and the references therein for a
survey on bounds of roots of Hermite polynomials.

\subsection{Properties of the Hermite functions}
\label{sec:prop-herm-funct}

Since $h_n(x)= d_n H_n(\sqrt{2\pi}x) \myexp{-\pi x^2}$, the zeros of $h_n$ are given by
$\frac{1}{\sqrt{2\pi}}x_k$, $k=1,2,\dots,n$, and they lie in the
oscillatory region
$\itvcc{-\frac{\sqrt{2n+1}}{\sqrt{2\pi}}}{\frac{\sqrt{2n+1}}{\sqrt{2\pi}}}$,
where $\frac{\sqrt{2n+1}}{\sqrt{2\pi}}$ is the turning point of the
harmonic oscillator in quantum mechanics
\[
  h^{''}_n(x) + (2n+1-x^2/(2 \pi)) h_n(x) = 0, \quad x \in \R.
\] 
For Hermite polynomial $H_n(x)$ the coefficient of $x^n$ is positive so
$h_n(x)>0$ for $x > \frac{1}{\sqrt{2\pi}}\sqrt{2n+1}$. Since $h_{n}$ satisfies
the above differential equation, it follows that $h_{n}$ is convex, i.e.,
$h''_{n}(x) > 0 $, for $\abs{x} > \frac{1}{\sqrt{2\pi}}\sqrt{2n+1}$, and thus
monotonically decreasing on $\itvoo{\frac{1}{\sqrt{2\pi}}\sqrt{2n+1}}{\infty}$.

\section{The Zak transform in Gabor analysis}
\label{sec:ZakTransform}

In this section we study a classical transform that has been used by Weil~\cite{MR0165033} in harmonic
analysis on locally compact abelian groups, by
Gel'fand~\cite{MR0073136} in the study of Schr\"odinger's equation,
and by Zak~\cite{MR1478343} in solid state physics. In Gabor analysis, see
\cite{MR3218799,MR3393698,MR947891},  it is common to use the name
\emph{Zak transform}, and we follow this tradition. For any $\lambda>0$ , the Zak transform of a function $f \in L^2(\R)$ is defined as
\begin{equation}\label{eq:zakTransform}
\left(Z_{\lambda}f\right)(x,\gamma)
= \sqrt{\lambda}\sum_{k\in\mathbb{Z}} f(\lambda(x+
  k))\myexp{-2\pi i k \gamma}, \quad a.e.\ x, \gamma \in \mathbb{R},
\end{equation}
with convergence in  $L^2_\mathrm{loc}(\R)$; see \cite[Lemma 8.2.1]{MR1843717}. The Zak transform
$Z_\lambda$ is a unitary transform from $L^2(\R)$ to $L^2(\itvco{0}{1}^2)$, cf. \cite[Theorem 8.2.3]{MR1843717}, with
the following quasi-periodicity:
\begin{equation}
Z_\lambda f(x+1,\gamma)= \myexp{2\pi i\gamma} Z_\lambda f(x,
\gamma),  \quad  Z_\lambda f(x, \gamma +1) = Z_\lambda f(x, \gamma) \label{eq:Zak-quasi-periodicity}
\end{equation}
for a.e.~$x,\gamma \in \R$.

We will study the Zak transform of odd and even, sufficiently nice, functions.
By ``sufficiently nice'' we usually mean membership of the Wiener space $W(\R)$ of
functions $g \in L^\infty(\R)$ for which
$\sum_{k \in \Z} \esssup_{x \in \itvcc{0}{1}}\abs{g(x+k)}<\infty$.
E.g., if $f$ belongs to $W(\R)$ and is continuous, then $Z_\lambda f$ is
continuous, \cite[Lemma 8.2.1]{MR1843717}, hence a.e.~identities of the Zak transform will hold pointwise everywhere.
Under the stronger assumption $f \in W(\R)$ and $\hat{f} \in W(\R)$, we have
\begin{equation}
  \label{eq:F-of-Zak}
  Z_\lambda f(x,\gamma) = \myexp{2\pi i x \gamma}
  Z_{1/\lambda}\hat{f}(\gamma,-x) \quad \text{for all } x,\gamma \in \R,
\end{equation}
with absolute convergence of the series. Equation~\eqref{eq:F-of-Zak} is a consequence of
Poisson summation formula, see e.g., \cite{MR947891} or \cite[Proposition
8.2.2]{MR1843717}. Note that Hermite functions and, more generally, any function $f$ in
$E_\ell \cap W(\R)$ for $\ell=0,1,2,3$, satisfy the assumption $f, \hat{f} \in W(\R)$.

In Section~\ref{sec:symmetries-par}, we use the Poisson summation formula to reveal symmetries in the Zak transform of Hermite functions. To illustrate, we present a simple example of these pointwise modular characteristics, showing how they lead to zeros of the Zak transform. For $f \in E_\ell \cap W(\R)$ with $\ell=0,1,2,3$ and using $\hat{f} = (-i)^\ell f$, Poisson summation formula~\eqref{eq:F-of-Zak} gives
\begin{align}
    Z_\lambda f(0,0) = (-i)^\ell Z_{1/\lambda}f(0,0)
    \label{eq:Zak-1-symm-origo}
\intertext{and}
   Z_\lambda f(\tfrac{1}{2},\tfrac{1}{2}) = (-i)^{\ell-1} Z_{1/\lambda}f(\tfrac{1}{2},\tfrac{1}{2}),
   \label{eq:Zak-1-symm-mid}
\end{align}
where $\lambda > 0$. For $\ell=2$ and $\lambda=1$ this simplifies to $Z_1 f(0,0) = - Z_{1}f(0,0)$ which implies the following zero of the Zak transform
\begin{equation}
    Z_1 f(0,0) = 0 \quad \text{for } f \in E_2 \cap W(\R).
    \label{eq:Zak-1-zero-origo}
\end{equation}
Similarly, for $\ell =3$, we find 
\begin{equation}
Z_1 f(\tfrac{1}{2},\tfrac{1}{2}) = 0 \quad \text{for } f \in E_3 \cap W(\R).
\label{eq:Zak-1-zero-mid}
\end{equation}
These zeros were discovered by Boon, Zak and Zucker, see equation (26) and (27) in \cite{WOS:A1983QC19800014}, for Hermite functions $h_n$ of order $n=4m+2$ and $n=4m+3$ ($m \in \Z_{\ge 0}$), respectively.

\subsection{Rationally oversampled Gabor systems}
\label{sec:rati-overs-gabor}

The frame property of rationally oversampled Gabor systems, i.e., $\mathcal{G}(g,\alpha,\beta)$ with
\[
\alpha\beta \in \mathbb{Q}, \quad \alpha\beta=\frac{p}{q}<1 \quad \gcd(p,q)=1,
\]
can be characterized by the Zak transform in terms of the so-called
Zibulski-Zeevi matrix~\cite{ZibulskiAnalysis1997}. This matrix is a $p \times q$ matrix whose $(k, t)$-entry is given by
\begin{equation*}
p^{-\frac{1}{2}}(Z_{\frac{1}{\beta}}g)\left(x-t\frac{p}{q},
\gamma+\frac{k}{p}\right) \quad\text{for } a.e.
\; x,\gamma\in\ensuremath{\mathbb{R}}.
\end{equation*}
Indeed, the lower and upper frame bounds of $\mathcal{G}(g,\alpha,\beta)$ correspond
  to the smallest and largest singular values of the Zibulski-Zeevi matrix
  uniformly over $(x,\gamma) \in  \itvcos{0}{1}^2$. In case the
  Zibulski-Zeevi matrix contains a zero row, the smallest singular value becomes
  zero whereby the lower frame bound of $\mathcal{G}(g,\alpha,\beta)$ fails. The precise
  statement is the following result, on which all our counterexamples are based.

  \begin{lemma}
    \label{lem:zero-row-failure}
      Let $g \in W(\R)$ be continuous. Suppose $\alpha\beta = \frac{p}{q}\in \Q$ with $p,q$
      relatively prime. If
\[
Z_{\frac{1}{\beta}}g(x_0+\tfrac{t}{q},\gamma_0)=0 \quad \text{for $t=0,1,\dots,q-1$ }
\]
for some $(x_0,\gamma_0) \in
  \itvcos{0}{1}^2$, then $\mathcal{G}(g,\alpha,\beta)$ is \emph{not} a frame.
  \end{lemma}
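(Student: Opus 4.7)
The plan is to invoke the Zibulski–Zeevi characterization and show that the stated hypothesis forces the Zibulski–Zeevi matrix to have a zero row at the point $(x_{0},\gamma_{0})$, whence its smallest singular value vanishes there, and by continuity the essential infimum over $\itvcos{0}{1}^{2}$ vanishes, contradicting the lower frame bound.

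More concretely, I would proceed as follows. First, appeal to the description recalled just before the lemma: the optimal lower and upper frame bounds of $\gaborG{g}$ are the essential infimum of the squared smallest singular value and the essential supremum of the squared largest singular value of the $p\times q$ Zibulski–Zeevi matrix $M(x,\gamma)$ with entries $p^{-1/2}\,Z_{1/\beta}g\bigl(x-t\tfrac{p}{q},\gamma+\tfrac{k}{p}\bigr)$, as $(x,\gamma)$ ranges over $\itvcos{0}{1}^{2}$. Since $g$ is continuous and $g\in W(\R)$, the Zak transform $Z_{1/\beta}g$ is continuous on $\R^{2}$, so each entry of $M$, and hence the smallest singular value $\sigma_{\min}(x,\gamma)$, depends continuously on $(x,\gamma)$.

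Second, I would exhibit a zero row of $M(x_{0},\gamma_{0})$. Fix $k=0$ and consider the row entries
\[
Z_{1/\beta}g\bigl(x_{0}-t\tfrac{p}{q},\gamma_{0}\bigr),\qquad t=0,1,\dots,q-1.
\]
Write $-tp = -n_{t}q + s_{t}$ with $s_{t}\in\{0,1,\dots,q-1\}$ and $n_{t}\in\Z$. Because $\gcd(p,q)=1$, the map $t\mapsto s_{t}$ is a permutation of $\{0,1,\dots,q-1\}$. Using the quasi-periodicity \eqref{eq:Zak-quasi-periodicity} in the first variable $n_{t}$ times yields
\[
Z_{1/\beta}g\bigl(x_{0}-t\tfrac{p}{q},\gamma_{0}\bigr)
= \myexp{-2\pi i n_{t}\gamma_{0}}\,Z_{1/\beta}g\bigl(x_{0}+\tfrac{s_{t}}{q},\gamma_{0}\bigr),
\]
and by hypothesis every factor on the right vanishes. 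Hence the $k=0$ row of $M(x_{0},\gamma_{0})$ is identically zero, so $\sigma_{\min}(x_{0},\gamma_{0})=0$.

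Third, I would pass from a pointwise zero to failure of the essential lower bound. By continuity of $\sigma_{\min}$, for every $\eps>0$ there is an open neighborhood $U_{\eps}\subset\itvcos{0}{1}^{2}$ (possibly after reducing $(x_{0},\gamma_{0})$ modulo $1$ to the fundamental domain, which only alters entries by unimodular factors and hence preserves singular values) of positive Lebesgue measure on which $\sigma_{\min}<\eps$. Consequently $\essinf_{(x,\gamma)\in\itvcos{0}{1}^{2}}\sigma_{\min}(x,\gamma)=0$, so the lower Gabor frame bound of $\gaborG{g}$ equals zero, and $\gaborG{g}$ is not a frame.

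The only nontrivial step is the second one: the bookkeeping with the permutation $t\mapsto s_{t}$ and the integer shifts $n_{t}$ that let us reduce the row entries at $x_{0}-tp/q$ to the prescribed values at $x_{0}+s/q$ via quasi-periodicity. Everything else is either direct invocation of the Zibulski–Zeevi characterization or a standard continuity/measure argument.
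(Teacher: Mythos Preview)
Your proposal is correct, and in fact goes beyond what the paper provides: the paper does not prove Lemma~\ref{lem:zero-row-failure} at all but simply states it as a consequence of the Zibulski--Zeevi characterization, with the sentence preceding the lemma (``In case the Zibulski--Zeevi matrix contains a zero row, the smallest singular value becomes zero whereby the lower frame bound \dots\ fails'') serving as the only justification. Your argument supplies exactly the details one would expect---the permutation trick using $\gcd(p,q)=1$ and quasi-periodicity to match the row entries at $x_{0}-tp/q$ to the hypothesized zeros at $x_{0}+s/q$, plus the continuity step---and is entirely in line with the paper's sketch.
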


Note that for integer oversampled Gabor systems, i.e.,
$\alpha\beta=1/q$, a zero row of the Zibulski-Zeevi matrix is the only possible reason for a failure of the
frame property. Indeed, an  integer oversampled Gabor system is a frame with bounds $A$ and
$B$ if and only if
\begin{align*}
  A \le \left(\sum_{t =0}^{q-1}
    \absbig{Z_{\tfrac{1}{\beta}}g(x+t/q,\gamma)}^2\right)^{1/2} \le B
  \quad \text{for a.e. $x,\gamma \in \itvco{0}{1}^2$.}
\end{align*}

In order to apply Lemma~\ref{lem:zero-row-failure} we need
to find $q$ simultaneous zeros of the Zak transform
along horizontal lines on $\itvco{0}{1}^2$ each uniformly separated by $1/q$.
Zeros of the Zak transform will be the theme of the next section.

\section[Zeros and symmetries of the Zak transform]{Zeros and symmetries of the Zak transform of Hermite functions}
\label{sec:zeros-symmetries-zak}

The main results of this section are Theorem~\ref{thm:symm_modu_even_n}
and~\ref{thm:symm_modu_odd_n} in Section~\ref{sec:symmetries-par} and the two lemmas
in Section~\ref{sec:additional-zeros-zak}.
Theorem~\ref{thm:symm_modu_even_n}
and~\ref{thm:symm_modu_odd_n} are a symmetry property of the function
$\R_{>0} \ni \lambda \mapsto Z_{s\lambda}g\bigl(\frac{x_0}{s^{2}},\gamma_0\bigr)$
for certain fixed values of $x_0$ and $\gamma_0$ with $s^2=2,3,4$ and $g$ being
a sufficiently nice eigenfunction of the Fourier transform. More
precisely, the same symmetry property will hold for \emph{one} $\gamma_0$ value,
but $s^2$ different $x_0$-values each separated by $1/s^2$. In
these cases, we show that
\begin{equation}
\R \ni \kappa \mapsto Z_{s2^\kappa}g\bigl(\frac{x_0}{s^{2}},\gamma_0\bigr)\label{eq:Zak-symmetry-kappa}
\end{equation}
is a bounded, continuous, and, more importantly, either an even or odd function.

The so-called even or odd modular characteristics of the function \eqref{eq:Zak-symmetry-kappa} shed new light on zeros of the Zak transform for Hermite functions. Evidently, they show that the zeros of \eqref{eq:Zak-symmetry-kappa} are symmetric around $\kappa=0$. Moreover, we will see that all ``non-trivial'' zeros discovered in \cite{WOS:A1983QC19800014} are associated with \emph{odd} modular characteristics and arise from the simple fact that an \emph{odd} and continuous
function on the real line has a zero at the origin, see Corollary~\ref{lem:extension-identities} and Remark~\ref{rem:four-zeros}. 
The known counterexamples of the frame set conjecture from \cite{Lemvigsome2017} precisely correspond to the cases where \eqref{eq:Zak-symmetry-kappa} is an odd
function (which follows from Theorem~\ref{thm:symm_modu_even_n}
and~\ref{thm:symm_modu_odd_n} with $\ell=2,3$). Note that Theorem~\ref{thm:symm_modu_odd_n} is not needed for the counterexamples presented in Section~\ref{sec:counterexamples}, but it is included because it sheds new light on the structure of the Zak transform for Hermite functions.

For the focus of this work, the remaining open cases for the Hermite frame set
conjecture correspond to \emph{even} modular characteristics of
\eqref{eq:Zak-symmetry-kappa} which do not guarantee the zeros of the Zak
transform needed in Corollary~\ref{cor:obstr-frame-prop-even-and-odd} since
\emph{even} and continuous functions on the real line do not necessarily have
any zeros. Hence, we have to work harder for the cases $h_n$, $n=4m$ and $n=4m+1$,
$m \in \Z_{>0}$. This is done in Section~\ref{sec:additional-zeros-zak}, where
we prove the existence of positive zeros of the function
\eqref{eq:Zak-symmetry-kappa} for $g=h_{n}$, $n > 2$.
The symmetry property of \eqref{eq:Zak-symmetry-kappa} will allow us to increase the
number of zeros and, hence, increase the number of counterexamples. We postpone this
to Section~\ref{sec:counterexamples}, where we also show how the symmetry property can be used to
improve the stability of numerical investigations of the frame set conjecture.

\subsection{Symmetries with respect to the time and frequency variables}
\label{sec:symmetries-va}

The Zak transform inherits symmetries of the function $f$ it acts on. In this subsection,
we recall some basic results of the Zak transform of real, imaginary, even and odd functions.
Recall that the Hermite functions are real-valued and either even or odd functions.

\begin{lemma} \label{lem:ZakProperties}
Let $f\in W(\R)$ be a continuous function. Let $m\in \Z$, $\lambda >0$, and let $x,\gamma \in \R$.
    \begin{enumerate}[(i)]
        \item Suppose $f$ is either an even or odd function. Let $j$ be $0$ if $f$ is even and $1$ if $f$ is odd. Then
        \begin{align} \label{eq:Zak_even_and_odd}
			Z_\lambda f(x,\gamma) &=(-1)^j Z_\lambda f(-x,-\gamma),
		\end{align}
		in particular
		\begin{align}
			Z_\lambda f\left(x+\tfrac{1}{2},\tfrac{m}{2}\right) &= (-1)^{j+m} Z_\lambda f\left(-x+\tfrac{1}{2},\tfrac{m}{2}\right), \quad \text{and} \label{eq:Zak_even_2}\\
			Z_\lambda f\left(x,\tfrac{m}{2}\right) &= (-1)^j Z_\lambda f\left(-x,\tfrac{m}{2}\right). \label{eq:Zak_even_and_odd2}
		\end{align}
		\label{item:Zak-symm-even-odd}
    \item Suppose $f$ is either a real or imaginary function. Let $k$ be 0 if $f$ is real and 1 if $f$ is imaginary. Then
    \begin{align}
        Z_\lambda f(x,\gamma)=(-1)^{k}\overline{Z_\lambda f(x,-\gamma)}.
        \label{eq:Zak-re-im-symm}
    \end{align}
    		\label{item:Zak-symm-real-imag}
    \end{enumerate}
\end{lemma}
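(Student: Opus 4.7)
The plan is to verify each identity directly from the series definition~\eqref{eq:zakTransform} of the Zak transform, supplemented (for the half-integer versions) by the quasi-periodicity relations~\eqref{eq:Zak-quasi-periodicity}. Since $f \in W(\R)$ and is continuous, the series converges absolutely and uniformly on compact sets, so rearrangements and reflections of the summation index are permitted.

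For part~(i), I would first prove~\eqref{eq:Zak_even_and_odd}. Substituting $k \mapsto -k$ in the defining series gives
\begin{equation*}
Z_\lambda f(-x,-\gamma) = \sqrt{\lambda}\sum_{k\in\Z} f(-\lambda(x+k))\, \myexp{-2\pi i k\gamma},
\end{equation*}
and the parity assumption $f(-y) = (-1)^j f(y)$ pulls out the factor $(-1)^j$ to match $Z_\lambda f(x,\gamma)$. To obtain~\eqref{eq:Zak_even_and_odd2}, apply~\eqref{eq:Zak_even_and_odd} at the point $(x,\tfrac{m}{2})$ to get $(-1)^j Z_\lambda f(-x,-\tfrac{m}{2})$, and then use the $\gamma$-periodicity $Z_\lambda f(\cdot,\gamma+1) = Z_\lambda f(\cdot,\gamma)$ from~\eqref{eq:Zak-quasi-periodicity}, shifting $-\tfrac{m}{2}$ by $m$ to turn it into $\tfrac{m}{2}$. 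For~\eqref{eq:Zak_even_2}, start with~\eqref{eq:Zak_even_and_odd} at $(x+\tfrac{1}{2},\tfrac{m}{2})$ to produce $(-1)^j Z_\lambda f(-x-\tfrac{1}{2},-\tfrac{m}{2})$. Rewrite $-x-\tfrac{1}{2} = (-x+\tfrac{1}{2}) - 1$ and apply the $x$-quasi-periodicity, which contributes $\myexp{-2\pi i(-m/2)} = (-1)^m$; finally use $\gamma$-periodicity once more to bring $-\tfrac{m}{2}$ to $\tfrac{m}{2}$. Combining the factors yields $(-1)^{j+m}$ as required.

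For part~(ii), I would take complex conjugates in the defining series:
\begin{equation*}
\overline{Z_\lambda f(x,-\gamma)} = \sqrt{\lambda}\sum_{k\in\Z} \overline{f(\lambda(x+k))}\, \myexp{-2\pi i k\gamma}.
\end{equation*}
If $f$ is real then $\overline{f}=f$ and the right-hand side is exactly $Z_\lambda f(x,\gamma)$; if $f$ is purely imaginary then $\overline{f}=-f$ and we pick up the sign $(-1)^k$ with $k=1$. This gives~\eqref{eq:Zak-re-im-symm}.

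No step is really an obstacle; the only care needed is bookkeeping the phase factors from quasi-periodicity when shifting the half-integer arguments. Because $\lambda>0$ is just a scaling parameter in the series and plays no role in any of the symmetries, the same argument covers all $\lambda>0$ uniformly.
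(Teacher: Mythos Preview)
Your proposal is correct and follows essentially the same approach as the paper: the main identity~\eqref{eq:Zak_even_and_odd} is established by the index reflection $k\mapsto -k$ together with the parity of $f$, the half-integer consequences~\eqref{eq:Zak_even_2}--\eqref{eq:Zak_even_and_odd2} are then read off from quasi-periodicity, and part~(ii) comes from conjugating the series. The paper is simply terser about the quasi-periodicity bookkeeping, but your explicit tracking of the phase $(-1)^m$ is exactly what is meant.
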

\begin{proof}
\eqref{item:Zak-symm-even-odd}: Let $x,\gamma \in \R$. By definition of $j$, we have
$f(x)=(-1)^jf(-x)$. Equation~\eqref{eq:Zak_even_and_odd} is readily verified:
\begin{align*}
    Z_\lambda f(x,\gamma)&=\sqrt{\lambda}\sum_{k\in \Z}f(\lambda(x+k))e^{-2\pi i \gamma k}\\
    &=\sqrt{\lambda}\sum_{k^{\prime}\in \Z}f(\lambda(x-k^{\prime}))e^{2 \pi i \gamma k^{\prime}}\\
    &=\sqrt{\lambda}\sum_{k^{\prime}\in \Z}f(-\lambda(-x+k^{\prime}))e^{2\pi i\gamma k^{\prime}}\\
    &=(-1)^{j}Z_\lambda f(-x,-\gamma).
\end{align*}
The particular cases easily follows by
equation~\eqref{eq:Zak_even_and_odd} and the quasi-periodicity of the
Zak transform.

\eqref{item:Zak-symm-real-imag}: By definition,
$f(x)=(-1)^k\overline{f(x)}$. Equation~\eqref{eq:Zak-re-im-symm} now follows by a
similar computation as in
the proof of part~\eqref{item:Zak-symm-even-odd}.
\end{proof}

Suppose $f$ is either even or odd and takes either real or imaginary values. With the definitions of $j$ and $k$ as in Lemma~\ref{lem:ZakProperties} above, we then have, by combining~\eqref{eq:Zak_even_and_odd} and~\eqref{eq:Zak-re-im-symm},
\begin{align} 
\label{eq:Zak-symm-evenodd-real-imag}
			Z_\lambda f(x,\gamma) =(-1)^j Z_\lambda f(-x,-\gamma) =(-1)^k \overline{Z_\lambda f(x,-\gamma)} \quad \text{for all }x,\gamma \in \R.
		\end{align}
The second equality in~\eqref{eq:Zak-symm-evenodd-real-imag} tells us that the Zak transform is also reflection (anti-)symmetric with respect to the first variable, i.e.,  $Z_\lambda f(x,\gamma) =(-1)^{k+j} \overline{Z_\lambda f(-x,\gamma)}$, and that 
        \begin{align*} 
        	\vert Z_\lambda f(x,\gamma)\vert = \vert Z_\lambda f(\pm x,\pm \gamma)\vert \qquad \text{for
          all }x,\gamma \in \R,
        \end{align*}
cf. equation (24) in \cite{WOS:A1983QC19800014}.

\begin{corollary}
\label{cor:Zak_even_odd_zeroes}
    Let $f\in W(\R)$ be continuous and $\lambda >0$.
    \begin{enumerate}[(i)]
        \item If $f$ is even, then
        \begin{align}
            Z_\lambda f(x_0,\gamma_0) &=0, \text{ for } (x_0,\gamma_0)\in \mathbb{Z}^2 + \left(\tfrac{1}{2},\tfrac{1}{2}\right) \label{eq:Zak_even_3}.
        \end{align}
        \item If $f$ is odd, then
        \begin{align*}
            Z_\lambda f(x_0,\gamma_0) &=0, \text{ for } (x_0,\gamma_0)\in					\frac{1}{2}\mathbb{Z}^2\backslash \left( \mathbb{Z}^2 + \left(\tfrac{1}{2},\tfrac{1}{2}\right)\right).
        \end{align*}
        \label{item:Zak-zeros-odd}
    \end{enumerate}
\end{corollary}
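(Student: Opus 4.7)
The strategy is to combine the symmetry identities from Lemma~\ref{lem:ZakProperties} with the quasi-periodicity \eqref{eq:Zak-quasi-periodicity} to reduce each statement to a handful of base points and then invoke the self-cancellation identity $Z_\lambda f = -Z_\lambda f$, which forces the Zak transform to vanish.

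For part (i), with $j=0$, apply \eqref{eq:Zak_even_2} at $x=0$, $m=1$: it reads $Z_\lambda f(\tfrac{1}{2},\tfrac{1}{2}) = -Z_\lambda f(\tfrac{1}{2},\tfrac{1}{2})$, hence $Z_\lambda f(\tfrac{1}{2},\tfrac{1}{2}) = 0$. To propagate this to every $(k+\tfrac{1}{2}, m+\tfrac{1}{2})$ with $k,m\in\Z$, I would first use periodicity in the second variable from \eqref{eq:Zak-quasi-periodicity} to reduce $\gamma_0 = m+\tfrac{1}{2}$ to $\gamma_0 = \tfrac{1}{2}$, and then apply the quasi-periodicity in the first variable $k$ times, producing a nonzero multiplicative factor $e^{2\pi i k /2}$ which leaves the vanishing property intact.

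For part (ii), with $j=1$, the points in $\tfrac{1}{2}\Z^2 \setminus (\Z^2+(\tfrac{1}{2},\tfrac{1}{2}))$ reduce modulo $\Z^2$ to the three base points $(0,0)$, $(\tfrac{1}{2},0)$ and $(0,\tfrac{1}{2})$. I would verify the vanishing at each:
\begin{itemize}
\item At $(0,0)$, use \eqref{eq:Zak_even_and_odd2} with $x=0$, $m=0$ to obtain $Z_\lambda f(0,0) = -Z_\lambda f(0,0)$.
\item At $(\tfrac{1}{2},0)$, use \eqref{eq:Zak_even_2} with $x=0$, $m=0$ to obtain $Z_\lambda f(\tfrac{1}{2},0) = -Z_\lambda f(\tfrac{1}{2},0)$.
\item At $(0,\tfrac{1}{2})$, use \eqref{eq:Zak_even_and_odd2} with $x=0$, $m=1$ to obtain $Z_\lambda f(0,\tfrac{1}{2}) = -Z_\lambda f(0,\tfrac{1}{2})$.
\end{itemize}
Propagation to all remaining half-integer points in the claimed set follows from quasi-periodicity exactly as in part (i).

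There is no real obstacle here; the only care needed is bookkeeping of the phase factor $e^{2\pi i \gamma}$ when shifting the first variable by an integer — it is nonzero, so vanishing at a base point is preserved. The result is an immediate corollary of Lemma~\ref{lem:ZakProperties} together with \eqref{eq:Zak-quasi-periodicity}.
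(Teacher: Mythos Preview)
Your proposal is correct and follows essentially the same approach as the paper: both use the symmetry identities \eqref{eq:Zak_even_2} and \eqref{eq:Zak_even_and_odd2} at $x=0$ to obtain self-cancellation at the base points, then invoke quasi-periodicity to propagate the zeros. The paper phrases the argument as ``odd function vanishes at the origin'' where you write out the explicit identity $Z_\lambda f = -Z_\lambda f$, but this is purely cosmetic.
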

\begin{proof}
    First, let $f\in W(\R)$ be continuous and even. Then equation~\eqref{eq:Zak_even_2} with $m=1$ reads $Z_\lambda f(x+\tfrac{1}{2},\tfrac{1}{2})=-Z_\lambda f(-x+\tfrac{1}{2},\tfrac{1}{2})$. Since $Z_\lambda f(x+\tfrac{1}{2},\tfrac{1}{2})$ is an odd function with respect to the $x$-variable, it follows, by taking $x=0$, that $Z_\lambda f(\tfrac{1}{2},\tfrac{1}{2})=0$. The other zeros of \eqref{eq:Zak_even_3} now follow from quasi-periodicity of the Zak transform. 
    
    The zeros in \eqref{item:Zak-zeros-odd} can be shown in a similar manner. For $f\in W(\R)$ continuous and odd, it follows from equation~\eqref{eq:Zak_even_and_odd2} that $Z_\lambda f(x,\tfrac{m}{2})$ ($m=0,1$) is an odd function with respect to $x$. Setting $x=0$ yields $Z_\lambda(0,\tfrac{m}{2})=0$ for $m=0,1$.
    Finally, with $m=0$ in equation~\eqref{eq:Zak_even_2}, we see that 
    $Z_\lambda f(x+1/2,0)$ is an odd function in $x$. Thus $Z_\lambda f$ has a zero at $(1/2,0)$. As above, the remaining zeros follow from quasi-periodicity.
\end{proof}

By the quasi-periodicity~\eqref{eq:Zak-quasi-periodicity}, the value of $Z_\lambda f$ of general functions $f$ in $L^2(\R)$ is completely determined by its value on the unit square $\itvco{0}{1}^2$ or, more generally, on any measurable set $S$ that (up to set of measure zero) tiles $\R^2$ by $\Z^2$-translations. The set $S$
is said to be a \emph{fundamental domain} of the Zak transform of functions in  $L^2(\R)$.

For even and odd functions, it follows by~\eqref{eq:Zak_even_and_odd} and~\eqref{eq:Zak_even_and_odd2}, respectively,  that the fundamental domain is $\itvco{0}{1/2}\times \itvco{0}{1}$ or $\itvco{0}{1}\times \itvco{0}{1/2}$ or, in general, any set of measure $1/2$ that tiles $\R^2$ by translations by $\Z^2$ and reflections with respect to the origo. If an even and odd function in $L^2(\R)$ is, in addition, real-valued or imaginary-valued, the fundamental domain again shrinks by a factor two, e.g., $\itvco{0}{1/2}^2$ is a fundamental domain and in general any set of measure $1/4$ that tiles $\R^2$ by translations by $\Z^2$ and reflections with respect to the origo \emph{and} either of the axes.

\subsection{Obstructions to the frame property}
\label{sec:obstr-frame-prop}

For odd and continuous functions $g\in W(\R)$ Lyubarskii and
Nes~\cite{MR3027914} showed that $\mathcal{G}(g,\alpha, \beta)$ fails to be a
frame along any of the hyperbolas $\alpha\beta= \tfrac{q-1}{q}$ for $q \in \Z_{>0}$.
Combining Lemma~\ref{lem:ZakProperties} and Lemma~\ref{lem:zero-row-failure} with Corollary \ref{cor:Zak_even_odd_zeroes}, we have the following point failure
 of the frame property for even and odd functions, which serves as a basis for all our counterexamples.

\begin{corollary}
  \label{cor:obstr-frame-prop-even-and-odd}
  \begin{enumerate}[(a)]
    \item Let  $\lambda>0$ and let $g \in W(\R)$ be an even, continuous
          function. \label{item:counterexamples-a-even} \label{item:cor-obsr-even}
          \begin{enumerate}[(i)]
            \item If $Z_{\lambda} g(1/4,1/2)=0$, then
                  $\mathcal{G}(g,\lambda/2,1/\lambda)$ is not a frame.
                  \label{item:cx-14}
            \item If $Z_{\lambda} g(1/6,1/2)=0$, then
                  $\mathcal{G}(g,\lambda/3,1/\lambda)$ and
                  $\mathcal{G}(g,2\lambda/3,1/\lambda)$ are not frames.
                  \label{item:cx-16}
          \end{enumerate}
    \item Let  $\lambda>0$ and let $g \in W(\R)$ be an odd, continuous
          function. \label{item:cor-obsr-odd}
          \begin{enumerate}[(i)]
            \item If either $Z_{\lambda} g(1/6,0)=0$ or $Z_{\lambda} g(1/3,0)=0$, then
                  $\mathcal{G}(g,\lambda/3,1/\lambda)$ is not a frame.
            \item If $Z_{\lambda} g(1/4,0)=0$, then
                  $\mathcal{G}(g,\lambda/4,1/\lambda)$ is not a frame.
          \end{enumerate}
        \end{enumerate}
\end{corollary}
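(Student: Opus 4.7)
The plan is to verify each subcase by applying Lemma~\ref{lem:zero-row-failure} with the substitution $\lambda = 1/\beta$. Writing $\alpha\beta = p/q$, the lemma demands $q$ zeros of $Z_\lambda g$ on a common horizontal line $\gamma = \gamma_0$, spaced by $1/q$. The hypothesis supplies one such zero; the remaining $q-1$ will be produced by combining the reflection identities~\eqref{eq:Zak_even_2} and~\eqref{eq:Zak_even_and_odd2} from Lemma~\ref{lem:ZakProperties}(i) with the automatic zeros of Corollary~\ref{cor:Zak_even_odd_zeroes} and the quasi-periodicity~\eqref{eq:Zak-quasi-periodicity}.

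For part~\ref{item:cor-obsr-even} ($g$ even) I would pick $\gamma_0 = 1/2$ throughout. In~\ref{item:cx-14} we have $q = 2$ with the hypothesis located at $x_0 = 1/4$; the partner point $(3/4, 1/2)$ follows from~\eqref{eq:Zak_even_2} with $x = 1/4$ and $m = 1$, yielding $Z_\lambda g(3/4, 1/2) = -Z_\lambda g(1/4, 1/2) = 0$. In~\ref{item:cx-16} we have $q = 3$ and take $x_0 = 1/6$, so the three required sample points are $(1/6, 1/2), (1/2, 1/2), (5/6, 1/2)$; the middle one vanishes automatically by Corollary~\ref{cor:Zak_even_odd_zeroes}(i), and the third comes from~\eqref{eq:Zak_even_2} with $x = 1/3$, $m = 1$, giving $Z_\lambda g(5/6, 1/2) = -Z_\lambda g(1/6, 1/2) = 0$. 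The same triple handles both $\alpha\beta = 1/3$ and $\alpha\beta = 2/3$, since only the denominator $q$ enters Lemma~\ref{lem:zero-row-failure}.

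For part~\ref{item:cor-obsr-odd} ($g$ odd) set $\gamma_0 = 0$. Under either hypothesis of (b)(i) we have $q = 3$: the sample points are $\{0, 1/3, 2/3\}$ if $x_0 = 1/3$ and $\{1/6, 1/2, 5/6\}$ if $x_0 = 1/6$. Corollary~\ref{cor:Zak_even_odd_zeroes}(ii) supplies the zeros at $0$ and at $1/2$ in each case, and the reflected points $2/3 \equiv -1/3$ and $5/6 \equiv -1/6 \pmod{1}$ each inherit the zero of the corresponding hypothesis by combining quasi-periodicity with~\eqref{eq:Zak_even_and_odd2} in the odd case ($j = 1$, $m = 0$), as in $Z_\lambda g(5/6, 0) = Z_\lambda g(-1/6, 0) = -Z_\lambda g(1/6, 0) = 0$. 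In (b)(ii), with $q = 4$ and $x_0 = 1/4$, the sample points $0, 1/4, 1/2, 3/4$ are covered by the automatic zeros at $0$ and $1/2$, the hypothesis at $1/4$, and the same reflection argument at $3/4$.

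Since every step reduces to bookkeeping with already-established identities, there is no real obstacle. The only care required is to track signs in the symmetry relations and to verify that each reflected point $1 - x \pmod{1}$ coincides with one of the required grid points $x_0 + t/q$; both checks are immediate in each listed configuration.
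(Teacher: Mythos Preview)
Your proof is correct and follows essentially the same approach as the paper: apply Lemma~\ref{lem:zero-row-failure} with $1/\beta=\lambda$, using the hypothesis zero together with the automatic zeros from Corollary~\ref{cor:Zak_even_odd_zeroes} and the reflection identities~\eqref{eq:Zak_even_2}--\eqref{eq:Zak_even_and_odd2} to fill out the required $q$ equispaced zeros on the horizontal line. The paper's proof only spells out case~(a)(ii) and leaves the rest as ``similar'', whereas you give the explicit bookkeeping for every subcase; the content is the same.
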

\begin{proof}
  We only proof part (ii) of \eqref{item:counterexamples-a-even} as the other
  proofs are similar. Since $g$ is even, it follows that
  $Z_{\lambda}g(1/2,1/2)=0$ for all $\lambda>0$. By \eqref{eq:Zak_even_2}, the
  assumption $Z_{\lambda} g(1/6,1/2)=0$ implies that
  $Z_{\lambda} g(5/6,1/2)=0$. Thus, the Zak transform of
  $Z_{\lambda}g(x,\gamma)$ has zeros $(k/3+1/6,1/2), k \in \Z$, along the
  horizontal line $\gamma=1/2$, each separated by multiples of $1/3$, so we
  can apply Lemma~\ref{lem:zero-row-failure} with $x_{0} = 1/6$ and
  $\gamma_{0}=1/2$, $q=3$ and $\frac{1}{\beta}=\lambda$. In both cases
  $\alpha\beta=1/3$ (i.e., $\alpha=\lambda/3$) and $\alpha\beta=2/3$ (i.e.,
  $\alpha=2\lambda/3$), we conclude that $\mathcal{G}(g,\alpha,\beta)$ is
  \emph{not} a frame.
\end{proof}


\subsection{Symmetries with respect to the modular parameter}
\label{sec:symmetries-par}

\begin{lemma} \label{lem:ZakPoisson}
Let $g\in E_\ell\cap W(\R)$ for $\ell=0,1,2,3$ and $\lambda>0$.
For $s^2 \in \Z_{> 0}$ and $x\in \R$, it holds:
\begin{align} \label{eq:symmetries}
Z_{s\lambda} g\bigl(\tfrac{x+p}{s^2},x\bigr) = (-i)^\ell 
  \myexp{2 \pi i x \tfrac{x+p}{s^2}} \frac{1}{s} \sum_{\rpar=0}^{s^2-1} \myexp{2\pi i \rpar \tfrac{x+p}{s^2}} Z_{s/\lambda} g\bigl(\tfrac{x+\rpar}{s^2},-x\bigr)
\end{align}
for all $p=0,\dots, s^2-1$.
\end{lemma}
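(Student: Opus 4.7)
The plan is to derive the identity by combining two ingredients: first, the Poisson-summation-type identity~\eqref{eq:F-of-Zak} together with the Fourier eigenrelation $\hat g = (-i)^{\ell} g$ valid on $E_{\ell} \cap W(\R)$, and second, a Chinese-remainder-type splitting of the summation index in the definition of the Zak transform. Since $g \in E_{\ell} \cap W(\R)$ implies $g, \hat g \in W(\R)$, all series converge absolutely and the pointwise identity~\eqref{eq:F-of-Zak} applies.

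First I would apply~\eqref{eq:F-of-Zak} with $\lambda$ replaced by $s\lambda$, evaluated at $(x_0,\gamma_0) = \bigl(\tfrac{x+p}{s^{2}},x\bigr)$, giving
\[
Z_{s\lambda} g\bigl(\tfrac{x+p}{s^2},x\bigr) = \myexp{2\pi i x \tfrac{x+p}{s^{2}}}\, Z_{1/(s\lambda)} \hat g\bigl(x,-\tfrac{x+p}{s^2}\bigr) = (-i)^{\ell}\myexp{2\pi i x \tfrac{x+p}{s^{2}}}\, Z_{1/(s\lambda)} g\bigl(x,-\tfrac{x+p}{s^2}\bigr).
\]
The remaining task is to rewrite $Z_{1/(s\lambda)}$ in terms of $Z_{s/\lambda} = Z_{s^{2}/(s\lambda)}$ at the $s^{2}$ shifted arguments $\tfrac{x+r}{s^{2}}$, $r=0,\dots,s^{2}-1$.

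Second, set $\lambda' = 1/(s\lambda)$, so that $s^{2}\lambda' = s/\lambda$. In the defining series
\[
Z_{\lambda'} g\bigl(x,-\tfrac{x+p}{s^{2}}\bigr) = \sqrt{\lambda'} \sum_{k \in \Z} g(\lambda'(x+k))\,\myexp{2\pi i k(x+p)/s^{2}},
\]
I would split the index as $k = s^{2}m+r$ with $r \in \{0,\dots,s^{2}-1\}$ and $m \in \Z$. The argument $\lambda'(x+k)$ becomes $s^{2}\lambda'\bigl(\tfrac{x+r}{s^{2}}+m\bigr)$, and the exponential factorises as $\myexp{2\pi i r(x+p)/s^{2}} \cdot \myexp{2\pi i m(x+p)}$; since $mp \in \Z$, the $p$-dependence in the latter drops out and we are left with $\myexp{2\pi i m x}$, which matches the frequency $-x$ in a Zak transform at scale $s^{2}\lambda' = s/\lambda$. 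Carrying this out and folding in the normalisation $\sqrt{\lambda'}/\sqrt{s^{2}\lambda'} = 1/s$ yields
\[
Z_{1/(s\lambda)} g\bigl(x,-\tfrac{x+p}{s^{2}}\bigr) = \frac{1}{s} \sum_{r=0}^{s^{2}-1} \myexp{2\pi i r(x+p)/s^{2}}\, Z_{s/\lambda} g\bigl(\tfrac{x+r}{s^{2}},-x\bigr).
\]
Substituting this into the display from the first step gives exactly~\eqref{eq:symmetries}.

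There is no real obstacle beyond careful bookkeeping of the exponentials; the only subtlety is that one must verify the interchange of the two summations, which is justified because $g \in W(\R)$ makes the double series absolutely convergent (equivalently, $Z_{\lambda'} g$ is given by an absolutely convergent series under our hypotheses, so Fubini applies). Note also that the appearance of $(-i)^\ell$ arises solely from the Fourier-eigenvalue substitution in step one, and that the identity holds uniformly for all $p = 0, \dots, s^{2}-1$ because $p$ enters on the right only through the phases $\myexp{2\pi i r(x+p)/s^{2}}$ and $\myexp{2\pi i x(x+p)/s^{2}}$.
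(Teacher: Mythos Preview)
Your proof is correct and follows essentially the same route as the paper: apply the Poisson identity~\eqref{eq:F-of-Zak} together with $\hat g=(-i)^{\ell}g$, then split the summation index as $k=s^{2}m+r$ and regroup into $s^{2}$ Zak transforms at scale $s/\lambda$. The only addition relative to the paper is that you explicitly note the absolute-convergence justification for the interchange of sums, which the paper leaves implicit.
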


\begin{proof}
An application of Poisson's summation formula~\eqref{eq:F-of-Zak} to the left-hand side of equation~\eqref{eq:symmetries} yields
\begin{align}
    Z_{s\lambda} g\bigl(\tfrac{x+p}{s^2},x\bigr) &= \exp\Bigl(2\pi i \frac{x+p}{s^2}x\Bigr) Z_{\frac{1}{s\lambda}}\hat{g}\bigl(x,-\tfrac{x+p}{s^2}\bigr) \nonumber \\
    \quad &= \exp\Bigl(2\pi i \frac{x+p}{s^2}x\Bigr)(-i)^\ell\frac{1}{\sqrt{s\lambda}}\sum_{k\in \Z} g\bigl(\tfrac{1}{s\lambda}(x+k)\bigr)\exp\Bigl(2\pi i \frac{x+p}{s^2}k\Bigr) \label{eq:symmetriesProof1},
\end{align}
where we have used that $g\in E_{\ell}$ is an eigenfunction of the Fourier transform in the final step.
For brevity, let $\eta = \exp\left(2\pi i x\frac{x+p}{s^2}\right)(-i)^\ell$, and write $\left(\frac{1}{s\lambda}(x+k)\right)$ as $\left(\frac{s}{\lambda} \cdot \frac{x+k}{s^2}\right)$. The series in equation~\eqref{eq:symmetriesProof1} can be split into $s^2$ series by the change of variables $k = \rpar + s^2m$, where $\rpar\in\{0,\ldots,s^2-1\}, m\in\Z$:
\begin{align*}
    Z_{s\lambda} g\bigl(\tfrac{x+p}{s^2},x\bigr) &= \frac{\eta}{\sqrt{s\lambda}} \sum_{\rpar=0}^{s^2-1}\sum_{m\in \Z} g\Bigl(\frac{s}{\lambda} \cdot \frac{x+\rpar+s^2m}{s^2}\Bigr)\exp\Bigl(2\pi i \frac{x+p}{s^2}(\rpar+s^2m)\Bigr)\\
    &=  \frac{\eta}{\sqrt{s\lambda}} \sum_{\rpar=0}^{s^2-1} \exp\Bigl(2\pi i \rpar \frac{x+p}{s^2}\Bigr)  \sum_{m\in \Z} g\Bigl(\frac{s}{\lambda} \cdot \frac{x+\rpar}{s^2}+m\Bigr)\exp\Bigl(2\pi i xm\Bigr).
\end{align*}
For a fixed $\rpar$ the series over $m\in \Z$ can be identified as a Zak transformation, up to a
missing scaling factor $\sqrt{s/\lambda}$. Hence, we arrive at:
\begin{align*}
    Z_{s\lambda} g\bigl(\tfrac{x+p}{s^2},x\bigr) &= \eta  \frac{1}{s}\sum_{\rpar=0}^{s^2-1} \exp\Bigl(2\pi i \rpar \frac{x+p}{s^2}\Bigr) Z_{\frac{s}{\lambda}}g\Bigl(\frac{x+\rpar}{s^2},-x\Bigr).
\end{align*}
Inserting the value of $\eta$ yields the desired identity.
\end{proof}

\begin{lemma} \label{lem:ZakPoissonCases}
Let $g \in W(\R)$ be an even or odd function   and $\lambda>0$. Define $j$ to be $0$
if $g$ is even and $1$ if $g$ is odd. Let $s^2 \in \Z_{> 0}$ and $p=0,\dots, s^2-1$.
\begin{enumerate}[(i)]
    \item For $s^2$ even, we have:
\begin{align}
\sum_{\rpar=0}^{s^2-1} \myexp{2\pi i \rpar \tfrac{\tfrac{1}{2}+p}{s^2}} Z_{s/\lambda} g\bigl(\tfrac{\tfrac{1}{2}+\rpar}{s^2},-\tfrac{1}{2}\bigr)
&= \sum_{\rpar=0}^{\frac{s^{2}}{2}-1} \bigl(\myexp{2\pi i \rpar
                                                                                                                                         \tfrac{\tfrac{1}{2}+p}{s^2}}+(-1)^{j}\myexp{2\pi i((s^{2}-1-\rpar)\tfrac{\tfrac{1}{2}+p}{s^{2}}+\tfrac{1}{2})}\bigr) \nonumber \\ &\cdot Z_{s/\lambda} g\bigl(\tfrac{\tfrac{1}{2}+\rpar}{s^2},-\tfrac{1}{2}\bigr); \label{eq:s2-even-half}
\\
\sum_{\rpar=0}^{s^2-1} \myexp{2\pi i \rpar \tfrac{p}{s^2}} Z_{s/\lambda} g\bigl(\tfrac{\rpar}{s^2},0\bigr)
&= \sum_{\rpar=1}^{\frac{s^{2}}{2}-1} \bigl(\myexp{2\pi i \rpar
                                                                                                      \tfrac{p}{s^2}}+(-1)^{j}\myexp{2\pi i \frac{p}{s^{2}}(s^{2}-\rpar)} \bigr) \cdot Z_{s/\lambda} g\bigl(\tfrac{\rpar}{s^2},0\bigr)    \nonumber \\&+Z_{s/\lambda} g\bigl(0,0\bigr)+\myexp{2\pi i \frac{p}{2}}Z_{s/\lambda} g\bigl(\tfrac{1}{2},0\bigr); \label{eq:s2-even-zero}
    \end{align} \label{item:s2-even}
\item For $s^2$ odd, we have:
\begin{align}
\sum_{\rpar=0}^{s^2-1} \myexp{2\pi i \rpar \tfrac{\tfrac{1}{2}+p}{s^2}} Z_{s/\lambda} g\bigl(\tfrac{\tfrac{1}{2}+\rpar}{s^2},-\tfrac{1}{2}\bigr)
&= \sum_{\rpar=0}^{\frac{s^{2}-1}{2}-1}\bigl(\myexp{2\pi i\rpar\frac{\tfrac{1}{2}+p}{s^{2}}}+(-1)^{j}\myexp{2\pi i((s^{2}-1-\rpar)\frac{\tfrac{1}{2}+p}{s^{2}}+\tfrac{1}{2})}\bigr) \nonumber \\ &\cdot Z_{s/\lambda} g\bigl(\tfrac{\tfrac{1}{2}+\rpar}{s^2},-\tfrac{1}{2}\bigr)+ \myexp{2\pi i \frac{s^{2}-1}{2}\frac{\tfrac{1}{2}+p}{s^{2}}}Z_{s/\lambda} g\bigl(\tfrac{1}{2},-\tfrac{1}{2}\bigr); \label{eq:s2-odd-half}\\
\sum_{\rpar=0}^{s^2-1} \myexp{2\pi i \rpar \tfrac{p}{s^2}} Z_{s/\lambda}
  g\bigl(\tfrac{\rpar}{s^2},0\bigr)
  &=\sum_{\rpar=1}^{\frac{s^{2}-1}{2}}\bigl(\myexp{2\pi i\rpar
    \frac{p}{s^{2}}}+(-1)^{j}\myexp{2\pi i
    \frac{p}{s^{2}}(s^{2}-\rpar)}\bigr) \cdot Z_{s/\lambda}
    g\bigl(\tfrac{\rpar}{s^2},0\bigr) \nonumber \\ &+Z_{s/\lambda}
                                                    g\bigl(0,0\bigr); \label{eq:s2-odd-zero}
\end{align} \label{item:s2-odd}
\end{enumerate}
\end{lemma}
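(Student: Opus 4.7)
The plan is to prove each of the four identities by the same mechanism: split the sum on the left-hand side into terms that can be paired via a reflection of the summation index, then apply the quasi-periodicity \eqref{eq:Zak-quasi-periodicity} together with the even/odd symmetries \eqref{eq:Zak_even_2} and \eqref{eq:Zak_even_and_odd2} to rewrite each paired term so that it has the same Zak value as its partner (up to an explicit exponential phase and a sign $(-1)^j$). The pairing has a ``fixed point'' whenever the argument $x$ of $Z_{s/\lambda}g(x,\gamma_0)$ coincides with $1/2$ (the center of the reflection modulo $\Z$), which is precisely what produces the isolated boundary terms on the right-hand sides.

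For the two $\gamma_0=-\tfrac12$ identities \eqref{eq:s2-even-half} and \eqref{eq:s2-odd-half}, I would substitute $\rpar \mapsto s^{2}-1-\rpar$ in the upper half of the summation index range. The argument of the Zak transform then becomes $\tfrac{\tfrac12+s^{2}-1-\rpar}{s^{2}}=1-\tfrac{\tfrac12+\rpar}{s^{2}}$. Using quasi-periodicity in $x$ with $\gamma=-\tfrac12$ picks up a factor $\myexp{-\pi i}=\myexp{\pi i}$, and \eqref{eq:Zak_even_and_odd} together with the $\gamma$-periodicity gives $Z_{s/\lambda}g(-x,-\tfrac12)=(-1)^{j}Z_{s/\lambda}g(x,-\tfrac12)$. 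Thus the partner contributes $(-1)^{j}\myexp{2\pi i((s^{2}-1-\rpar)\tfrac{\tfrac12+p}{s^{2}}+\tfrac12)}\cdot Z_{s/\lambda}g(\tfrac{\tfrac12+\rpar}{s^{2}},-\tfrac12)$, which is exactly the second term in the bracket of \eqref{eq:s2-even-half}. For $s^2$ odd, the map $\rpar\mapsto s^{2}-1-\rpar$ has the fixed point $\rpar=(s^{2}-1)/2$, at which the Zak argument is $1/2$; this term is left unpaired and appears as the last summand of \eqref{eq:s2-odd-half}.

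The two $\gamma_0=0$ identities \eqref{eq:s2-even-zero} and \eqref{eq:s2-odd-zero} are handled analogously, but with the substitution $\rpar\mapsto s^{2}-\rpar$ so that $\tfrac{s^{2}-\rpar}{s^{2}}=1-\tfrac{\rpar}{s^{2}}$. Quasi-periodicity in $x$ contributes no phase this time (since $\gamma_0=0$), and \eqref{eq:Zak_even_and_odd2} with $m=0$ yields $Z_{s/\lambda}g(-x,0)=(-1)^{j}Z_{s/\lambda}g(x,0)$. Consequently the partner of $\rpar$ contributes $(-1)^{j}\myexp{2\pi i\tfrac{p}{s^{2}}(s^{2}-\rpar)}\cdot Z_{s/\lambda}g(\tfrac{\rpar}{s^{2}},0)$. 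The fixed points of $\rpar\mapsto s^{2}-\rpar$ on $\{0,\dots,s^{2}-1\}$ are $\rpar=0$ (always) and $\rpar=s^{2}/2$ (only when $s^{2}$ is even); these unpaired indices give the extra $Z_{s/\lambda}g(0,0)$ and, in the even case, the $\myexp{2\pi i p/2}Z_{s/\lambda}g(\tfrac12,0)$ boundary terms on the right-hand sides.

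The main obstacle is essentially bookkeeping: one must track separately the $\myexp{\pi i}$ that arises from quasi-periodicity in $x$ at $\gamma_0=-\tfrac12$ (and is absent at $\gamma_0=0$), and must correctly identify, for each parity of $s^{2}$, which summation indices are self-paired under the reflection. Once these two points are handled, all four identities follow from a single substitution and one application of the reflection symmetry, and no further properties of $g$ beyond being even or odd are required.
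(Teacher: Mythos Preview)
Your proposal is correct and follows essentially the same approach as the paper: both arguments pair terms via the reflections $\rpar\mapsto s^{2}-1-\rpar$ (for $\gamma_0=-\tfrac12$) and $\rpar\mapsto s^{2}-\rpar$ (for $\gamma_0=0$), invoke the even/odd symmetry together with quasi-periodicity, and isolate the unpaired indices as boundary terms. The only cosmetic difference is that the paper applies the consolidated identity \eqref{eq:Zak_even_2} directly, whereas you decompose it into quasi-periodicity plus \eqref{eq:Zak_even_and_odd} and $\gamma$-periodicity; also note that calling $\rpar=0$ a ``fixed point'' of $\rpar\mapsto s^{2}-\rpar$ is a slight abuse (its image $s^{2}$ lies outside the index range), but your conclusion that it is unpaired is correct.
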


\begin{proof}
We first consider the sum
 \begin{equation}
    \sum_{\rpar=0}^{s^2-1} \myexp{2\pi i \rpar \tfrac{\tfrac{1}{2}+p}{s^2}}
    Z_{s/\lambda}
    g\bigl(\tfrac{\tfrac{1}{2}+\rpar}{s^2},-\tfrac{1}{2}\bigr).
  \label{eq:at-one-half}
\end{equation}
By the symmetry property~\eqref{eq:Zak_even_2}, we see that
  \[
    Z_{s/\lambda}g\bigl(-\frac{\frac{1}{2}+k}{s^{2}}+1,-\frac{1}{2}\bigr)=(-1)^{j}\myexp{2\pi
      i
      \frac{1}{2}}Z_{s/\lambda}g\bigl(\frac{\frac{1}{2}+k}{s^{2}},-\frac{1}{2}\bigr)
    \quad k\in \bigg\lbrace 0,1,2,\cdots,\Big\lceil \frac{s^{2}}{2}-1\Big\rceil \bigg\rbrace.
    \]
    which identifies a pairing of the terms in the
    sum~\eqref{eq:at-one-half} by combining and factorizing the summands $\rpar=0$ and
    $\rpar=s^{2}-1$, $\rpar=1$ and $\rpar=s^{2}-2$ and so forth. If
    $s^{2}$ is even, we can pair all terms in the sum this way, and if
    $s^2$ is odd we leave the central term $\rpar=\tfrac{s^2-1}{2}$
    unpaired. This pairing then yields the relations
    \eqref{eq:s2-even-half} and \eqref{eq:s2-odd-half}.

Now we consider the sum $\sum_{\rpar=0}^{s^2-1} \myexp{2\pi i \rpar
  \tfrac{p}{s^2}} Z_{s/\lambda} g\bigl(\tfrac{\rpar}{s^2},0\bigr)$.
Again, by symmetry~\eqref{eq:Zak_even_2}, we see that 
\begin{equation}
    Z_{s/\lambda}g\bigl(-\tfrac{k}{s^{2}}+1,0\bigr)=(-1)^{j}Z_{s/\lambda}g\bigl(\tfrac{k}{s^{2}},0\bigr),\quad k\in\bigg\lbrace 1,2,\cdots,\Big\lceil \frac{s^{2}}{2}-1\Big\rceil \bigg\rbrace.
    \label{eq:pairing2}
\end{equation}
Equation (\ref{eq:pairing2}) suggests a pairing of the Zak transforms
in $\sum_{\rpar=0}^{s^2-1} \myexp{2\pi i \rpar \tfrac{p}{s^2}} Z_{s/\lambda}
g\bigl(\tfrac{\rpar}{s^2},0\bigr)$, by pairing $\rpar=1$ with
$\rpar=s^{2}-1$, $\rpar=2$ with $\rpar=s^{2}-2$ and so forth. This leaves
the term associated with $\rpar=0$ not being paired. Hence, if
    $s^{2}$ is odd we can pair all terms, excluding $\rpar=0$, in the sum this way, and if
    $s^2$ is even, we leave the term $\rpar=\tfrac{s^2-1}{2}$
    (as well as $\rpar=0$)
    unpaired. This pairing immediately yields the relations
    \eqref{eq:s2-even-zero} and \eqref{eq:s2-odd-zero}.
\end{proof}

For \emph{even} Hermite functions, or more generally, for functions in
$E_\ell\cap W(\R)$, $\ell=0,2$, we have the following symmetry property as
illustrated in Figure~\ref{fig:thm_symmetry_even}.
\begin{figure}
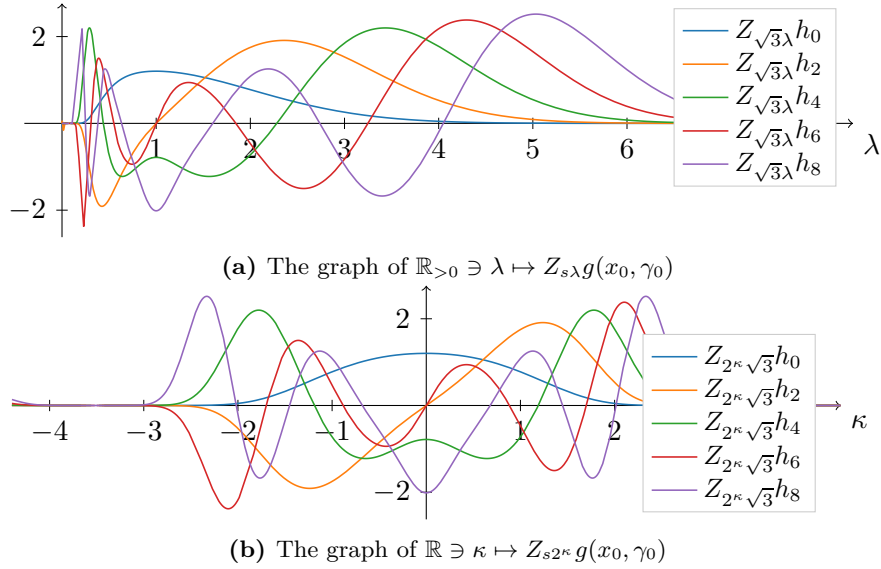

     \centering
     \begin{subfigure}[b]{1\textwidth}
         \centering
  \centering


         \caption{The graph of $\R \ni \kappa \mapsto Z_{s2^{\kappa}}g(x_{0},\gamma_{0})$}
         \label{fig:zak_even_real_kappa}
     \end{subfigure}
\caption{The Zak transform at $(x_{0},\gamma_{0})$ as a function of the modular
  parameter for $g=h_n$, $n=0,2,4,\dots, 8$ with
  $(x_{0},\gamma_{0})=\bigl(\frac{1/2+p}{s^{2}},\tfrac{1}{2}\bigr) = (1/6,1/2)$,
  where $s^{2}=3$ and $p=0$. The symmetry property from
  Theorem~\ref{thm:symm_modu_even_n} is most apparent from
  Figure~\ref{fig:zak_even_real_kappa}. The Zak transform
  $\R \ni \kappa \mapsto Z_{s2^{\kappa}}g(x_{0},\gamma_{0})$ is even for
  $g=h_{4m}$ and odd for $g=h_{4m-2}$ for $m\in \Z_{\ge 0}$. Note the truncation errors
  for $\kappa \le -4$; we comment on this issue in Section~\ref{sec:numer-exper}.}
\label{fig:thm_symmetry_even}
   \end{figure}

\begin{theorem}
  \label{thm:symm_modu_even_n}
For $g\in E_\ell\cap W(\R)$, $\ell=0,2$, and $s^{2}\in \{2,3\}$, it holds for any $\lambda>0$:
\begin{equation}
    Z_{s\lambda}g\bigl(\frac{\tfrac{1}{2}+p}{s^{2}},\tfrac{1}{2}\bigr)
    =
    (-1)^{\floor{\ell/2}}Z_{s/\lambda}g\bigl(\frac{\tfrac{1}{2}+p}{s^{2}},\tfrac{1}{2}\bigr) \quad \text{for
      all } p=0,1,\ldots,s^{2}-1.
  \label{eq:Zak-symmetry-one-half}
  \end{equation}
  In case $s^2=2$, equation~\eqref{eq:Zak-symmetry-one-half} also
  holds for $\ell=1,3$. 
\end{theorem}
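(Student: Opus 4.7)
The plan is to apply Lemma~\ref{lem:ZakPoisson} with $x=\tfrac{1}{2}$, which rewrites the left-hand side of~\eqref{eq:Zak-symmetry-one-half} as
\[
  Z_{s\lambda}g\bigl(\tfrac{\tfrac{1}{2}+p}{s^{2}},\tfrac{1}{2}\bigr)
  = (-i)^{\ell}\myexp{i\pi\tfrac{\tfrac{1}{2}+p}{s^{2}}}\frac{1}{s}\sum_{\rpar=0}^{s^{2}-1}\myexp{2\pi i\rpar\tfrac{\tfrac{1}{2}+p}{s^{2}}}Z_{s/\lambda}g\bigl(\tfrac{\tfrac{1}{2}+\rpar}{s^{2}},-\tfrac{1}{2}\bigr),
\]
and then to reduce the right-hand sum to a single scalar multiple of $Z_{s/\lambda}g\bigl(\tfrac{\tfrac{1}{2}+p}{s^{2}},\tfrac{1}{2}\bigr)$. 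First I would use the quasi-periodicity $Z_{s/\lambda}g(x,-\tfrac{1}{2})=Z_{s/\lambda}g(x,\tfrac{1}{2})$ together with the coupling identity
\[
  Z_{s/\lambda}g\bigl(\tfrac{\tfrac{1}{2}+(s^{2}-1-\rpar)}{s^{2}},\tfrac{1}{2}\bigr)=-(-1)^{j}Z_{s/\lambda}g\bigl(\tfrac{\tfrac{1}{2}+\rpar}{s^{2}},\tfrac{1}{2}\bigr),
\]
which is a direct consequence of~\eqref{eq:Zak_even_2} at $m=1$ (with $j\in\{0,1\}$ recording whether $g$ is even or odd, exactly as in the pairing argument of Lemma~\ref{lem:ZakPoissonCases}), in order to pair the indices $\rpar$ and $s^{2}-1-\rpar$.

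In the case $s^{2}=2$ all terms are paired; in the case $s^{2}=3$ the unpaired central index $\rpar=1$ contributes $Z_{s/\lambda}g(\tfrac{1}{2},\tfrac{1}{2})$, which vanishes by Corollary~\ref{cor:Zak_even_odd_zeroes} because the hypothesis of the theorem for $s^{2}=3$ forces $g$ to be even (this is precisely why $\ell\in\{1,3\}$ is excluded for $s^{2}=3$). After pairing, the reduced sum becomes a finite sum of unimodular phases multiplied by $Z_{s/\lambda}g\bigl(\tfrac{\tfrac{1}{2}+p}{s^{2}},\tfrac{1}{2}\bigr)$; the sub-case $(s^{2},p)=(3,1)$ is degenerate as both sides of~\eqref{eq:Zak-symmetry-one-half} then vanish. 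It then remains to evaluate, for each admissible triple $(s^{2},p,\ell)$, this finite sum multiplied by the Poisson prefactor $(-i)^{\ell}\myexp{i\pi(\tfrac{1}{2}+p)/s^{2}}/s$. For $s^{2}=2$ the key identities are $(1\pm(-1)^{j}i)/\sqrt{2}=\myexp{\mp(-1)^{j}i\pi/4}$, and for $s^{2}=3$ they are $1-\myexp{\pm 2\pi i/3}=\sqrt{3}\myexp{\mp i\pi/6}$; in every case the factor $1/s$ and the scalar $\myexp{i\pi(\tfrac{1}{2}+p)/s^{2}}$ in the Poisson prefactor telescope with these identities, and what remains is precisely $i^{j}(-i)^{\ell}$, which equals $(-1)^{\floor{\ell/2}}$ since $j\equiv\ell\pmod{2}$.

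The main obstacle is the phase bookkeeping across the reduction: one must carefully combine the Poisson prefactor, the geometric-sum phases $\myexp{2\pi i\rpar(\tfrac{1}{2}+p)/s^{2}}$, the coupling sign $-(-1)^{j}$ incurred when translating the paired argument $\tfrac{\tfrac{1}{2}+(s^{2}-1-\rpar)}{s^{2}}=1-\tfrac{\tfrac{1}{2}+\rpar}{s^{2}}$ back to $\tfrac{\tfrac{1}{2}+\rpar}{s^{2}}$, and the eigenvalue factor $(-i)^{\ell}$, and verify that their product equals $(-1)^{\floor{\ell/2}}$ in every admissible case. Once this short finite check is carried out, the identity~\eqref{eq:Zak-symmetry-one-half} follows.
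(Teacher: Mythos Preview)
Your proposal is correct and follows essentially the same route as the paper: apply Lemma~\ref{lem:ZakPoisson} at $x=\tfrac12$, use quasi-periodicity together with the symmetry~\eqref{eq:Zak_even_2} to pair the terms $r\leftrightarrow s^{2}-1-r$ (with the middle term for $s^{2}=3$ vanishing by Corollary~\ref{cor:Zak_even_odd_zeroes} since $g$ is even), and then carry out a finite phase check. Your packaging of the final constant as $i^{j}(-i)^{\ell}=(-1)^{\lfloor\ell/2\rfloor}$ is a tidy way to summarize the case-by-case computation that the paper performs explicitly.
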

\begin{proof}
  We consider first the case $s^{2}=2$. Hence, let $g\in E_\ell\cap
  W(\R)$ for $\ell=0,1,2,3$. In this case Lemma~\ref{lem:ZakPoisson}
  yields 
\begin{align}
  Z_{\sqrt{2}\lambda} g\bigl(\frac{\tfrac{1}{2}+p}{2},\tfrac{1}{2}\bigr) & =
\myexp{-2\pi i \tfrac{\ell}{4}} \myexp{2\pi i \frac{\tfrac{1}{2}+p}{4}}\tfrac{1}{\sqrt{2}} \sum_{\rpar=0}^{1}\myexp{2\pi i\rpar   \frac{\tfrac{1}{2}+p}{2}}Z_{\sqrt{2}/\lambda}g\bigl(\tfrac{\tfrac{1}{2}+\rpar}{2},-\tfrac{1}{2}\bigr).
\label{eq:zaksqrt2Estimate1}
\end{align}
 By Lemma \ref{lem:ZakProperties}(i) and
 equation~\eqref{eq:Zak-quasi-periodicity}, the sum in
 \eqref{eq:zaksqrt2Estimate1} can be rewritten as
\begin{align*}
   \sum_{\rpar=0}^{1}\myexp{2\pi i\rpar \frac{\tfrac{1}{2}+p}{2}}Z_{\sqrt{2}/\lambda}g\bigl(\tfrac{\tfrac{1}{2}+\rpar}{2},-\tfrac{1}{2}\bigr) &= \bigl(\myexp{2\pi i p \frac{\tfrac{1}{2}+p}{2}}+\myexp{2\pi i[(1-p)\frac{\tfrac{1}{2}+p}{2}-\tfrac{j+1}{2}]}\bigr) Z_{\sqrt{2}/\lambda}g\bigl(\tfrac{\tfrac{1}{2}+p}{2},-\tfrac{1}{2}\bigr) \\ 
   &= \bigl(\myexp{2\pi i p \frac{\tfrac{1}{2}+p}{2}}+\myexp{2\pi i[(1-p)\frac{\tfrac{1}{2}+p}{2}-\tfrac{j+1}{2}]}\bigr) Z_{\sqrt{2}/\lambda}g\bigl(\tfrac{\tfrac{1}{2}+p}{2},\tfrac{1}{2}\bigr), 
\end{align*}
where $j=0$ if $\ell =0,2$ and $j=1$ if $\ell=1,3$. Hence, to show
\eqref{eq:Zak-symmetry-one-half} for $s^2=2$, we have only left to show that the
phase factor
\begin{align*}
   \myexp{-2\pi i\tfrac{\ell}{4}}\myexp{2\pi i\frac{\tfrac{1}{2}+p}{4}}\tfrac{1}{\sqrt{2}}
    \bigl(\myexp{2\pi i p \frac{\tfrac{1}{2}+p}{2}}+\myexp{2\pi
  i[(1-p)\frac{\tfrac{1}{2}+p}{2}-\tfrac{j+1}{2}]}\bigr)
\end{align*}
equals $(-1)^{\ceil{\ell/2}}$. We first compute
\begin{align*}
  \myexp{2\pi i\frac{\tfrac{1}{2}+p}{4}} 
    \bigl(\myexp{2\pi i p \frac{\tfrac{1}{2}+p}{2}}+\myexp{2\pi i[(1-p)\frac{\tfrac{1}{2}+p}{2}-\tfrac{j+1}{2}]}\bigr)&=\myexp{2\pi i(p+\tfrac{1}{2})\frac{\tfrac{1}{2}+p}{2}}+(-1)^{j+1}\myexp{2\pi i (\tfrac{3}{2}-p)\frac{\tfrac{1}{2}+p}{2}} \\
    &=\myexp{\pi i
      p(1+p)}\myexp{\frac{\pi i}{4}} +(-1)^{\ell+1} \myexp{\pi ip(1-p)}\myexp{\frac{3\pi i }{4}}\\
    &=\myexp{\frac{\pi
      i}{4}}+(-1)^\ell \myexp{-\frac{\pi i}{4}}
\end{align*}
where the third equality holds since $p(1\pm p)$ is even for any $p\in \Z$.
Therefore,
\begin{align*}
   \myexp{-2\pi i\tfrac{\ell}{4}}\myexp{2\pi i\frac{\tfrac{1}{2}+p}{4}}\tfrac{1}{\sqrt{2}}
    \bigl(\myexp{2\pi i p \frac{\tfrac{1}{2}+p}{2}}+\myexp{2\pi i[(1-p)\frac{\tfrac{1}{2}+p}{2}-\tfrac{j+1}{2}]}\bigr)&=\frac{1}{\sqrt{2}}\myexp{-2\pi i\tfrac{\ell}{4}} \bigl(\myexp{\frac{\pi
      i}{4}}+(-1)^\ell \myexp{-\frac{\pi i}{4}}\bigr)\\
    &=
      \begin{cases}
        1 & \ell=0,1, \\
        -1 & \ell=2,3,
      \end{cases}
\end{align*}
which is what we had to show. 

We now turn to the case $s^{2}=3$. Let $g \in E_\ell \cap W(\R)$ for $\ell=0$ or
$\ell=2$. For $p=1$ equation~\eqref{eq:Zak-symmetry-one-half} trivially
holds as $Z_\lambda g(\tfrac{1}{2},\tfrac{1}{2})=0$ for any
$\lambda>0$ whenever $g$ is even, see~\eqref{eq:Zak_even_3}.
Hence, we only have to consider $p\in \{0,2\}$. Using Lemma~\ref{lem:ZakPoisson} we get
\begin{equation}
    Z_{\sqrt{3}\lambda}g\bigl(\tfrac{\tfrac{1}{2}+p}{3},\tfrac{1}{2}\bigr)=(-1)^{\ell/2}\myexp{2\pi i\frac{\tfrac{1}{2}+p}{6}}\frac{1}{\sqrt{3}}\sum_{\rpar=0}^{2}\myexp{2\pi i \rpar \frac{\tfrac{1}{2}+p}{3}}Z_{\sqrt{3}/\lambda}g\bigl(\tfrac{\tfrac{1}{2}+\rpar}{3},-\tfrac{1}{2}\bigr).
    \label{eq:zaksqrt3EST}
  \end{equation}
As above, we first rewrite the sum in \eqref{eq:zaksqrt3EST}:
\begin{align*}
    \sum_{\rpar=0}^{2}\myexp{2\pi i \rpar
\frac{\tfrac{1}{2}+p}{3}}Z_{\sqrt{3}/\lambda}g\bigl(\tfrac{\tfrac{1}{2}+\rpar}{3},-\tfrac{1}{2}\bigr) = \bigl(
  \myexp{2\pi i p \frac{\tfrac{1}{2}+p}{3}}+\myexp{2\pi i
  [(2-p)\frac{\tfrac{1}{2}+p}{3}-\tfrac{1}{2}]}\bigr) Z_{\sqrt{3}/\lambda}g\bigl(\tfrac{\tfrac{1}{2}+p}{3},\tfrac{1}{2}\bigr)
\end{align*}
using that  $Z_\lambda g(\tfrac{1}{2},-\tfrac{1}{2})=0$ for any
$\lambda>0$ since $g$ is even. We then compute the phase factor: 
\begin{multline*}
  (-1)^{\ell/2} \myexp{2\pi
  i\frac{\tfrac{1}{2}+p}{6}}\tfrac{1}{\sqrt{3}}\bigl( \myexp{2\pi i p
  \frac{\tfrac{1}{2}+p}{3}}+\myexp{2\pi i
  [(2-p)\frac{\tfrac{1}{2}+p}{3}-\tfrac{1}{2}]}\bigr) \\  = \tfrac{1}{\sqrt{3}} (-1)^{\ell/2}
                                                         \bigl(\myexp{2\pi
                                                         i
                                                         (p+\tfrac{1}{2})\frac{\tfrac{1}{2}+p}{3}}
                                                         -  \myexp{2\pi i (\tfrac{5}{2}-p)\frac{\tfrac{1}{2}+p}{3}}\bigr)\\
    =\tfrac{1}{\sqrt{3}} (-1)^{\ell/2} \bigl( \myexp{\frac{\pi i}{6}}\myexp{2\pi i
      \tfrac{p(1+p)}{3}} - \myexp{\frac{5\pi i}{6}}\myexp{2\pi i\tfrac{p(2-p)}{3}} \bigr)\\
    =\tfrac{1}{\sqrt{3}}(-1)^{\ell/2}\bigl(\myexp{\frac{\pi i}{6}}-\myexp{\frac{5\pi i }{6}} \bigr)
    =(-1)^{\ell/2},
  \end{multline*}
where in the third equality we used that $p \neq 1 (\mathop{mod} 3)$. Combining
the above three displayed equations yields the desired identity.  
\end{proof}

For \emph{odd} Hermite functions, or more generally, for functions in
$E_\ell\cap W(\R)$, $\ell=1,3$, the symmetry properties in
Theorem~\ref{thm:symm_modu_even_n} does not hold, see
Figure~\ref{fig:thm_no_symmetry_odd}. However, it is possible to find another
similar symmetry property for these odd functions as detailed in Theorem~\ref{thm:symm_modu_odd_n} below. 
\begin{figure}
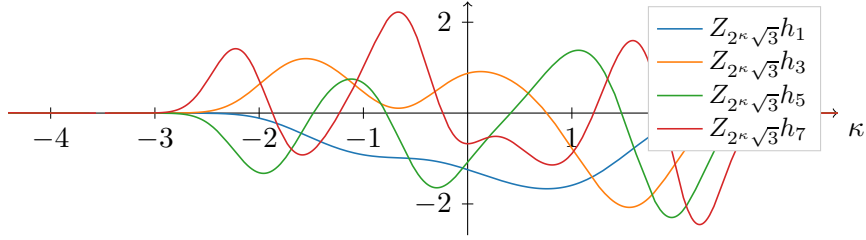

  \centering


         \caption{For the same parameter choices as in
           Figure~\Ref{fig:thm_symmetry_even} the Zak transform
  $\kappa \mapsto Z_{s2^{\kappa}}g(x_{0},\gamma_{0})$ is neither even nor
  odd for $g=h_{n}$ when $n$ is \emph{odd}. The figure illustrates the cases $n=1,3,5,7$.}
\label{fig:thm_no_symmetry_odd}
\end{figure}

\begin{theorem}
    \label{thm:symm_modu_odd_n}
For $g\in E_\ell\cap W(\R)$, $\ell=1,3$, and $s^{2}\in \{3,4\}$, it holds for any $\lambda>0$:
\begin{equation}
    Z_{s\lambda}g\bigl(\frac{p}{s^{2}},0\bigr)
    =
    (-1)^{(\ell-1)/2}Z_{s/\lambda}g\bigl(\frac{p}{s^{2}},0\bigr) \quad \text{for
      all } p=0,1,\ldots,s^{2}-1.
  \label{eq:Zak-symmetry-zero}
  \end{equation}
\end{theorem}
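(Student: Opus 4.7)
My plan is to mirror the proof strategy used for Theorem~\ref{thm:symm_modu_even_n}. Since we now evaluate the Zak transform at $(p/s^2, 0)$ rather than at $((\tfrac{1}{2}+p)/s^2, \tfrac{1}{2})$, I will apply Lemma~\ref{lem:ZakPoisson} with $x = 0$ (in place of $x = \tfrac{1}{2}$), which directly yields
\[
Z_{s\lambda}g\bigl(\tfrac{p}{s^2}, 0\bigr) = (-i)^\ell\, \tfrac{1}{s} \sum_{\rpar=0}^{s^2-1} \myexp{2\pi i \rpar p/s^2}\, Z_{s/\lambda}g\bigl(\tfrac{\rpar}{s^2}, 0\bigr).
\]
Next I will rewrite the sum on the right using the pairing identities~\eqref{eq:s2-odd-zero} (for $s^2=3$) and~\eqref{eq:s2-even-zero} (for $s^2=4$) from Lemma~\ref{lem:ZakPoissonCases}, taking $j = 1$ since $g$ is odd.

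The crucial simplification is that the unpaired boundary terms vanish: for odd $g \in W(\R)$, Corollary~\ref{cor:Zak_even_odd_zeroes}\eqref{item:Zak-zeros-odd} gives $Z_{s/\lambda}g(0, 0) = 0$ and, when $s^2=4$, also $Z_{s/\lambda}g(\tfrac{1}{2}, 0) = 0$. Consequently, in both cases the sum collapses to the single surviving term $\bigl(\myexp{2\pi i p/s^2} - \myexp{2\pi i p(s^2-1)/s^2}\bigr) Z_{s/\lambda}g(1/s^2, 0)$. The cases $p = 0$ (for both $s^2$) and $p = s^2/2=2$ (for $s^2=4$) are then automatic, as both sides of~\eqref{eq:Zak-symmetry-zero} vanish by the same Corollary applied to the LHS. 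The remaining non-trivial cases reduce to $p \in \{1, s^2-1\}$, and the oddness combined with quasi-periodicity~\eqref{eq:Zak-quasi-periodicity} gives $Z_\mu g\bigl(\tfrac{s^2-1}{s^2}, 0\bigr) = -Z_\mu g(\tfrac{1}{s^2}, 0)$ for any $\mu>0$, allowing me to rewrite the surviving $Z_{s/\lambda}g(1/s^2,0)$ as $\pm Z_{s/\lambda}g(p/s^2, 0)$ on the right-hand side.

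The final step is a phase verification in the two representative cases $(s^2, p) = (3, 1)$ and $(s^2, p) = (4, 1)$. For $s^2 = 3$ and $p = 1$, the root-of-unity difference $\myexp{2\pi i/3} - \myexp{4\pi i/3}$ equals $i\sqrt{3}$, making the overall prefactor
\[
(-i)^\ell \cdot \tfrac{1}{\sqrt{3}} \cdot i\sqrt{3} = (-i)^\ell \cdot i,
\]
which is $+1$ for $\ell=1$ and $-1$ for $\ell=3$, i.e., $(-1)^{(\ell-1)/2}$ as required. For $s^2 = 4$ and $p = 1$ we get $\myexp{\pi i/2} - \myexp{3\pi i/2} = 2i$, and the prefactor becomes $(-i)^\ell \cdot \tfrac{1}{2} \cdot 2i = (-i)^\ell \cdot i$, producing the same factor $(-1)^{(\ell-1)/2}$. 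The cases $p = s^2 - 1$ follow analogously: the two sign changes introduced by the oddness-plus-quasi-periodicity rewrite (one on each side) cancel, and the root-of-unity factor flips sign in a compensating way, so the same final constant $(-i)^\ell \cdot i$ appears.

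I anticipate the main obstacle to be purely bookkeeping: keeping track of how the complex prefactor from Lemma~\ref{lem:ZakPoisson}, the $(-1)^j$ pairing sign from Lemma~\ref{lem:ZakPoissonCases} with $j=1$, and the oddness-plus-quasi-periodicity sign all combine consistently into $(-1)^{(\ell-1)/2}$. There is no genuinely new analytic ingredient beyond those used for Theorem~\ref{thm:symm_modu_even_n}; the pleasing structural feature is that the same factor $(-i)^\ell \cdot i$ emerges from quite different root-of-unity computations in the $s^2 = 3$ and $s^2 = 4$ cases, reflecting a shared pointwise modular characteristic of the Zak transform for odd eigenfunctions of $\ft$.
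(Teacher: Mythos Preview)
Your proposal is correct and follows essentially the same approach as the paper's proof: apply Lemma~\ref{lem:ZakPoisson} with $x=0$, eliminate the boundary terms $Z_{s/\lambda}g(0,0)$ and (for $s^2=4$) $Z_{s/\lambda}g(\tfrac12,0)$ via Corollary~\ref{cor:Zak_even_odd_zeroes}\eqref{item:Zak-zeros-odd}, then pair the remaining terms using the odd symmetry and verify the resulting phase factor $(-i)^\ell\cdot i=(-1)^{(\ell-1)/2}$. The only cosmetic difference is that the paper pairs directly to $Z_{s/\lambda}g(p/s^2,0)$ using Lemma~\ref{lem:ZakProperties}(i) and the modular identities $p^2\equiv 1$ and $(s^2-p)p\equiv s^2-1 \pmod{s^2}$, whereas you collapse to $Z_{s/\lambda}g(1/s^2,0)$ first and then convert; both routes are the same pairing argument.
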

\begin{proof}
  We first consider $s^{2}=3$. As $g\in E_\ell\cap W(\R)$ for $\ell=1, 3$ is an
  odd function, we have by
  Corollary~\ref{cor:Zak_even_odd_zeroes}\eqref{item:Zak-zeros-odd} that, if $p=0$,
  then $Z_{\lambda}g\bigl(\frac{p}{3},0 \bigr)=0$ for all $\lambda > 0$. So, let
  $p \in \{1,2\}$. In this case Lemma~\ref{lem:ZakPoisson} yields
  \begin{equation}
      Z_{\sqrt{3}\lambda}g\bigl(\frac{p}{3},0\bigr)=(-i)^{\ell} \frac{1}{\sqrt{3}}\sum_{r=1}^{2}e^{2\pi i r\frac{p}{3}}Z_{\sqrt{3}/\lambda}g(\frac{r}{3},0), \label{eq:odd_sum_s3}
  \end{equation}
  since $Z_{\sqrt{3}/\lambda}g(0,0)=0$. By Lemma \ref{lem:ZakProperties}(i) the
  sum in \eqref{eq:odd_sum_s3} can be factored as
  \begin{align*}
      &\sum_{r=1}^{2}e^{2\pi i r\frac{p}{3}}Z_{\sqrt{3}/\lambda}g(\frac{r}{3},0)= \bigl(e^{2\pi i \frac{p^{2}}{3}}-e^{2\pi i [(3-p)\frac{p}{3}]}\bigr) Z_{\sqrt{3}/\lambda}g(\frac{p}{3},0).
  \end{align*}
  Thus, to show the equality in \eqref{eq:Zak-symmetry-zero}, it suffices to show that the factor
  \begin{equation*}
      (-i)^{\ell}\frac{1}{\sqrt{3}}\bigl(e^{2\pi i \frac{p^{2}}{3}}-e^{2\pi i [(3-p)\frac{p}{3}]}\bigr)
  \end{equation*}
  equals $(-1)^{(\ell-1)/2}$. We first compute the complex exponential
  \begin{align*}
      e^{2\pi i \frac{p^{2}}{3}}-e^{2\pi i [(3-p)\frac{p}{3}]}&= e^{2\pi \frac{i}{3}}-e^{2\pi i [(3-p)\frac{p}{3}]}\\
      &=e^{2\pi \frac{i}{3}}-e^{2\pi i\frac{2}{3}},
  \end{align*}
  where the first equation follows from $p^{2}\equiv 1 \mod 3$, and the second equality follows from $(3-p)p\equiv 2 \mod 3$. Consequently,
  \begin{align*}
      (-i)^{\ell}\frac{1}{\sqrt{3}}\bigl(e^{2\pi \frac{i}{3}}-e^{2\pi i\frac{2}{3}}\bigr)&=(-i)^{\ell-1}=(-1)^{(\ell-1)/2}, 
  \end{align*}
  which is what we wanted to show.

We now consider the case where $s^{2}=4$. Since $g$ is an odd function, Corollary~\ref{cor:Zak_even_odd_zeroes}\eqref{item:Zak-zeros-odd} shows that $Z_{\lambda}g(\frac{p}{4},0)=0$ for $p=0,2$. Consequently, we only consider $p\in \{1,3\}$. Using Lemma~\ref{lem:ZakPoisson} we have
\begin{equation}
    Z_{2\lambda}g(\frac{p}{4},0)=(-i)^{\ell}\frac{1}{2}\bigl(e^{2\pi i \frac{p}{4}}Z_{2/\lambda}g(\frac{1}{4},0)+e^{2\pi i \frac{3p}{4}}Z_{2/\lambda}g(\frac{3}{4},0)\bigr), \label{eq:odd_sum_s4}
\end{equation}
where we used that $Z_{2/\lambda}g(0,0)=Z_{2/\lambda}g(\frac{1}{2},0)=0$. Similarly to the above computation, we rewrite the sum in \eqref{eq:odd_sum_s4}
  \begin{align*}
      e^{2\pi i \frac{p}{4}}Z_{2/\lambda}g(\frac{1}{4},0)+e^{2\pi i \frac{3p}{4}}Z_{2/\lambda}g(\frac{3}{4},0)&= \bigl(e^{2\pi i \frac{p^{2}}{4}}-e^{2\pi i [(4-p)\frac{p}{4}]}\bigr) Z_{2/\lambda}g(\frac{p}{4},0).
  \end{align*}
  We then compute the complex exponential factor
  \begin{align*}
      (-i)^{\ell}\frac{1}{2}\bigl(e^{2\pi i \frac{p^{2}}{4}}-e^{2\pi i [(4-p)\frac{p}{4}]}\bigr)&=(-i)^{\ell}\frac{1}{2}\bigl(e^{2\pi \frac{i}{4}}-e^{2\pi i \frac{3}{4}}\bigr)\\ =(-1)^{(\ell-1)/2},
  \end{align*}
  where the first equality follows by $p^{2}\equiv 1 \mod 4$ and $(4-p)p\equiv 3 \mod 4$. Combining the results for the cases $s^{2}=3$ and $s^{2}=4$ provides the identity of the theorem.
\end{proof}
As $g$ in Theorem~\ref{thm:symm_modu_odd_n} is an odd function, we have by   Corollary~\ref{cor:Zak_even_odd_zeroes}\eqref{item:Zak-zeros-odd}
that $Z_{\lambda}g\bigl(\frac{p}{2},0)=0$ for all $p \in \Z$. Thus, the relation~\eqref{eq:Zak-symmetry-zero}
is also true for $s^{2}=2$.

Let us end this section by showing how we can recover the zeros found in \cite{WOS:A1983QC19800014} and re-discovered and extended to eigenspaces in \cite[Lemma 5]{Lemvigsome2017}.
\begin{corollary}[\cite{WOS:A1983QC19800014,Lemvigsome2017}]
\label{lem:extension-identities}
\begin{enumerate}[(i)]
\item For $g \in E_2 \cap W(\R)$, we have:
\[
  Z_{\sqrt{2}} g (x,\gamma) = 0 \quad \text{for } (x,\gamma) \in
  (\tfrac14 \Z \setminus \Z) \times (\Z +\tfrac12),
\]
and
\[
  Z_{\sqrt{3}} g (x,\gamma) = 0 \quad \text{for } (x,\gamma) \in
  (\tfrac13 \Z + \tfrac16) \times (\Z +\tfrac12).
\] \label{item:cor-zeros-1}
\item For $g \in E_3 \cap W(\R)$ and $s\in \{2,3,4\}$, we have:
\[
  Z_{\sqrt{s}} g (x,\gamma) = 0 \quad \text{for } (x,\gamma) \in \tfrac1s \Z \times \Z ,
\]
and 
\begin{equation}
\label{eq:new-zeros-Zak-at-12}
  Z_{\sqrt{2}} g (x,\gamma) = 0 \quad \text{for } (x,\gamma) \in
  (\tfrac12 \Z+\tfrac14) \times (\Z+\tfrac12) .
\end{equation}
\label{item:cor-zeros-2}
\end{enumerate}
\end{corollary}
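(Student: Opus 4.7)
The unifying idea is to set $\lambda=1$ in the symmetry identities from Theorems~\ref{thm:symm_modu_even_n} and~\ref{thm:symm_modu_odd_n} so that the left- and right-hand sides coincide. Since in each relevant case the accompanying sign is $-1$, the identity degenerates to $Z_s g(x_0,\gamma_0) = -Z_s g(x_0,\gamma_0)$ and forces the value to vanish. The remaining points in the stated sets are reached by the quasi-periodicity~\eqref{eq:Zak-quasi-periodicity} of the Zak transform together with the parity-induced trivial zeros from Corollary~\ref{cor:Zak_even_odd_zeroes}. Here, I am using that every function in $E_{2}\cap W(\R)$ is even and every function in $E_{3}\cap W(\R)$ is odd, since $h_{4m+2}$ is even and $h_{4m+3}$ is odd.

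For part~\eqref{item:cor-zeros-1}, let $g\in E_{2}\cap W(\R)$. Since $\ell=2$ gives $(-1)^{\floor{\ell/2}}=-1$, Theorem~\ref{thm:symm_modu_even_n} with $\lambda=1$ and $s^{2}=2$ yields $Z_{\sqrt{2}}g\bigl(\tfrac{1/2+p}{2},\tfrac12\bigr)=0$ for $p=0,1$, i.e., the zeros $(1/4,1/2)$ and $(3/4,1/2)$. Adding the trivial zero $Z_{\sqrt{2}}g(1/2,1/2)=0$ (Corollary~\ref{cor:Zak_even_odd_zeroes}, $g$ even) and propagating by quasi-periodicity in both variables yields the full set $(\tfrac14\Z\setminus\Z)\times(\Z+\tfrac12)$. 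The argument for $s^{2}=3$ is identical: the theorem gives zeros at $(1/6,1/2)$ and $(5/6,1/2)$, Corollary~\ref{cor:Zak_even_odd_zeroes} supplies the zero at $(1/2,1/2)$, and quasi-periodicity spreads these to $(\tfrac13\Z+\tfrac16)\times(\Z+\tfrac12)$.

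For part~\eqref{item:cor-zeros-2}, let $g\in E_{3}\cap W(\R)$. For the claim on $\tfrac1s\Z\times\Z$, the parity of $g$ already supplies, via Corollary~\ref{cor:Zak_even_odd_zeroes}\eqref{item:Zak-zeros-odd}, the zeros at $(0,0)$ and $(1/2,0)$ for every $\lambda>0$; this covers $s=2$ entirely. For $s^{2}=3$ and $s^{2}=4$ I apply Theorem~\ref{thm:symm_modu_odd_n} with $\lambda=1$: since $\ell=3$ gives $(-1)^{(\ell-1)/2}=-1$, the identity collapses to $Z_{s}g(p/s^{2},0)=0$ for each $p=0,\dots,s^{2}-1$, and quasi-periodicity yields the full set $\tfrac1s\Z\times\Z$. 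Finally, for~\eqref{eq:new-zeros-Zak-at-12} I invoke the ``in case $s^{2}=2$'' extension of Theorem~\ref{thm:symm_modu_even_n}, which remains valid for $\ell=3$; with $\lambda=1$ it produces the zeros at $(1/4,1/2)$ and $(3/4,1/2)$, and quasi-periodicity finishes the proof.

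There is no genuine obstacle here: once Theorems~\ref{thm:symm_modu_even_n} and~\ref{thm:symm_modu_odd_n} are in hand, the corollary is essentially a lookup exercise, specializing $\lambda=1$ and combining the resulting pointwise zeros with parity (Corollary~\ref{cor:Zak_even_odd_zeroes}) and quasi-periodicity. The only mild bookkeeping is to verify that the listed point sets are exactly the $\Z^{2}$-orbits (under the quasi-periodicity) of the finite list of zeros produced by $p=0,\dots,s^{2}-1$, augmented by the trivial parity zeros; this is a direct inspection of each case.
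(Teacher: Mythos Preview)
Your proposal is correct and follows essentially the same approach as the paper: specialize $\lambda=1$ (equivalently $\kappa=0$) in Theorems~\ref{thm:symm_modu_even_n} and~\ref{thm:symm_modu_odd_n} so that the sign $-1$ forces the Zak transform to vanish at the listed base points, then extend using the parity zeros of Corollary~\ref{cor:Zak_even_odd_zeroes} and quasi-periodicity. Your write-up is in fact somewhat more explicit than the paper's, which simply says ``the function is odd, so taking $\kappa=0$ yields the zero'' and then cites~\eqref{eq:Zak_even_3} and quasi-periodicity.
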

\begin{proof}
    Theorem~\ref{thm:symm_modu_even_n} for $\ell=2$ states that
    the function
    \[
\R \ni \kappa  \mapsto Z_{s 2^{\kappa}}g\bigl(\frac{\tfrac{1}{2}+p}{s^{2}},\tfrac{1}{2}\bigr)
    \]
    is odd for
    $s^{2} \in \set{2,3}$ and $p \in \set{0,1,\dots,s-1}$.
Since the function is odd, taking $\kappa=0$ (i.e., $\lambda=1$) yields
$Z_{s}\bigl(\frac{1/2+p}{s^{2}},\tfrac{1}{2}\bigr)=0$. The
statement~(\ref{item:cor-zeros-1}) now follows by~\eqref{eq:Zak_even_3} and the
quasi-periodicity of the Zak transform.

Applying Theorem~\ref{thm:symm_modu_odd_n} and Theorem~\ref{thm:symm_modu_even_n} with $\ell=3$ will in the same way shows assertion (\ref{item:cor-zeros-2}).
  \end{proof}

\begin{remark}
\label{rem:four-zeros}
The zeros of the Zak transform of Hermite functions in \eqref{eq:Zak-1-zero-origo}, \eqref{eq:Zak-1-zero-mid} and Corollary~\ref{lem:extension-identities} coincide with all the new zeros found by Boon, Zak and Zucker and correspond to the filled circles in Figure~1 of \cite{WOS:A1983QC19800014}. All these zeros are associated with an \emph{odd} modular characteristic of the Zak transform as illustrated in the proof of Corollary~\ref{lem:extension-identities}.  The counterexamples to the frame set conjecture found in \cite{Lemvigsome2017} follow by a simple application of
Corollary~\ref{cor:obstr-frame-prop-even-and-odd} and Corollary~\ref{lem:extension-identities}.

In \cite{WOS:A1983QC19800014}, there are no non-trivial zeros of the Zak transform of Hermite functions $h_n$ of order $n=4m$ and $n=4m+1$, where ``trivial zeros'' refers to those resulting from the window function being even or odd. The analysis of zeros of the Zak transform of these Hermite functions will be the focus of the next section.

As a historical remark, let us mention that the zeros of the Zak transform in \eqref{eq:Zak-1-zero-origo}, \eqref{eq:Zak-1-zero-mid} and \eqref{eq:new-zeros-Zak-at-12} are not discussed in \cite{Lemvigsome2017} 
since they do not lead to new obstructions for the Gabor frame property. However, it is interesting to note that the second named author in \cite{Lemvigsome2017}, being unaware of the work in \cite{WOS:A1983QC19800014}, independently identified the same non-trivial zeros as in \cite{WOS:A1983QC19800014}. 
\end{remark}

\subsection{Additional zeros of the Zak transform as a function of the modular parameter}
\label{sec:additional-zeros-zak}

In this section we will take $\gamma_{0}=1/2$ for $h_{n}$ being an even function
($n$ even) and $\gamma_{0}=0$ for $h_{n}$ being an odd function ($n$ odd). We
will also let $x_{0}$ be a fixed, but arbitrary real number in
$\itvcc{-1/4}{1/4}+\Z$. The function
$\R_{>0} \ni \lambda \mapsto Z_{s\lambda}h_{n}(x_{0},\gamma_{0})$ is continuous
for any $n \in \Z_{\ge 0}$. We will here show that the function also has a zero
for any values of $n \ge 3$.

\begin{lemma}
  \label{lem:h_even_zero}
  Let $n \ge 4$ be an even integer. Suppose $x_0 \in \itvcc{-1/4}{1/4}+\Z$ is given. Then there exists a $\lambda > 0$
  so that $Z_\lambda h_n(x_0,\tfrac{1}{2})=0$ and therefore
  \begin{equation}
    \label{eq:Zak_h_even_zeros}
      Z_\lambda h_n(\pm x_0+k,\tfrac{1}{2}+\ell)=0 \quad \text{for all
  $k,\ell \in \Z$.}
  \end{equation}
\end{lemma}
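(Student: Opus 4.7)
The strategy is an intermediate value theorem applied to the continuous, real-valued function
\[
F(\lambda) := Z_\lambda h_n(x_0, \tfrac{1}{2}), \qquad \lambda > 0.
\]
Real-valuedness follows from $h_n$ being real and even (pairing $k$ with $-k$ in the Zak series yields a real cosine-sum), and continuity in $\lambda$ from $h_n \in W(\R)$ via dominated convergence. By Lemma~\ref{lem:ZakProperties}\eqref{item:Zak-symm-even-odd} and the quasi-periodicity~\eqref{eq:Zak-quasi-periodicity} one has $F(\lambda) = Z_\lambda h_n(-x_0, \tfrac{1}{2})$, so we may restrict to $x_0 \in [0, 1/4]$; the additional identities~\eqref{eq:Zak_h_even_zeros} are then immediate from those same symmetries.

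Let $y_1 > y_2 > 0$ denote the two largest positive zeros of $h_n$; they exist since $n \ge 4$. For $x_0 \in (0, 1/4]$ choose
\[
\lambda_- := y_1/x_0, \qquad \lambda_+ := 2y_1/x_0.
\]
At $\lambda_-$ the $k = 0$ summand of the Zak series vanishes because $h_n(y_1) = 0$, while every remaining summand $(-1)^k h_n\bigl(y_1(1 + k/x_0)\bigr)$ has argument of magnitude at least $3 y_1$ (since $x_0 \le 1/4$). By Lemma~\ref{lem:x1-simple-bound}, $y_1$ lies well past the turning point $\sqrt{(2n+1)/(2\pi)}$, so $h_n$ is positive and rapidly decaying on every such argument (monotone decay from Section~\ref{sec:prop-herm-funct}, combined with evenness). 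The $k = \pm 1$ terms both come out negative with magnitudes $h_n\bigl(y_1(1/x_0-1)\bigr)$ and $h_n\bigl(y_1(1/x_0+1)\bigr)$, and dominate the $|k| \ge 2$ tails by an explicit exponential factor; hence $F(\lambda_-) < 0$. At $\lambda_+$ the $k = 0$ term $\sqrt{\lambda_+}\,h_n(2 y_1) > 0$ is dominant in exactly the same sense (the next closest argument has magnitude $\ge 6 y_1$), so $F(\lambda_+) > 0$. Continuity of $F$ plus the intermediate value theorem then produce a zero $\lambda \in (\lambda_-, \lambda_+)$.

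The boundary point $x_0 = 0$ has to be handled separately because the $k = 0$ cancellation is no longer available. Instead take $\lambda_- = y_1$, which cancels the $k = \pm 1$ summands by evenness; one gets
\[
F(y_1) = \sqrt{y_1}\,\Bigl[h_n(0) + 2\sum_{k \ge 2}(-1)^k h_n(k y_1)\Bigr],
\]
whose sign matches that of $h_n(0) = (-1)^{n/2}|h_n(0)|$ up to exponentially small corrections. For the companion $\lambda_+$, pick the point in $\bigl(y_1, \sqrt{(2n+1)/(2\pi)}\bigr)$ where $h_n$ attains its first local maximum $M_n > 0$; the $k = \pm 1$ contribution $-2\sqrt{\lambda_+}\,M_n$ then has sign opposite to $h_n(0)$, and the inequality $M_n > \tfrac12|h_n(0)|$ — which holds for every even $n \ge 4$ — makes it strictly larger in magnitude than the $k = 0$ term. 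Hence $F(\lambda_-)F(\lambda_+) < 0$ and IVT applies once more.

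The main obstacle is the uniform-in-$n$ tail control: the sum of $h_n$-values at the $k \ne 0$ arguments must be strictly smaller than the main contribution, also at small $n$. The key ingredients are Lemma~\ref{lem:x1-simple-bound} (placing $y_1$ safely past the turning point), convexity of the Hermite roots, and the monotone post-turning-point decay recalled in Section~\ref{sec:prop-herm-funct}. The $x_0 = 0$ sub-case further depends on the comparison $M_n > \tfrac12|h_n(0)|$, verified by direct computation for each small $n$ and by Plancherel–Rotach asymptotics for large $n$.
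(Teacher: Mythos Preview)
Your overall strategy — reduce by symmetry to $x_0\in(0,1/4]$, prove $F$ is real and continuous, and apply IVT between two explicit $\lambda$-values — matches the paper. But your execution diverges from the paper at the crucial step of controlling the sign, and the divergence introduces genuine gaps.

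The paper never uses ``explicit exponential factor'' tail domination. Instead it rewrites the Zak series, via evenness of $h_n$, as
\[
\sum_{k\in\Z}(-1)^k h_n(\lambda(x_0+k)) \;=\; \sum_{k\ge 0}(-1)^k\bigl[h_n(\lambda(x_0+k))-h_n(\lambda(1-x_0+k))\bigr],
\]
and then appeals only to the alternating series test. For the positive side they take any $\lambda_1$ with $\lambda_1 x_0$ past the turning point; the bracketed differences are positive (monotonicity of $h_n$) and decreasing (convexity of $h_n$ there plus MVT), so the whole series is positive. For the negative side they pick $\lambda_0$ so that $\lambda_0 x_0\in(\tilde x_2,\tilde x_1)$, making the $k=0$ term $h_n(\lambda_0 x_0)$ itself negative; the two remaining tails (after splitting off $k=0$) are again alternating with negative first term. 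The only nontrivial verification is that $\lambda_0(1-x_0)$ exceeds the turning point, and that is exactly where Lemma~\ref{lem:x1-simple-bound} is used. No explicit decay bound ever appears.

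Your $\lambda_-=y_1/x_0$ kills the $k=0$ term and relies on the $k=\pm 1$ terms dominating the $|k|\ge 2$ tail ``by an explicit exponential factor''; you do not supply that factor. Your $\lambda_+=2y_1/x_0$ requires $h_n(2y_1)$ to dominate the entire $|k|\ge 1$ tail, which again needs a quantitative bound you do not give. Both claims are plausible (and can in fact be rescued by the paper's rewriting and an alternating-series argument), but as written they are assertions, not proofs. The $x_0=0$ case is worse: your argument hinges on $M_n>\tfrac12|h_n(0)|$, which you justify only by ``direct computation for small $n$ and Plancherel--Rotach asymptotics for large $n$''. That is a nontrivial inequality requiring its own proof. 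The paper sidesteps $x_0=0$ by a one-word appeal to continuity; you have correctly sensed that this boundary case needs separate thought, but your proposed fix is not complete either.

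In short: the IVT skeleton is right, but the paper's alternating-series rewriting is the clean tool you are missing, and it renders both your tail estimates and your $M_n$ inequality unnecessary.
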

\begin{proof}
  The Zak transform $Z_\lambda h_n(\cdot,1/2)$ is both
  quasi-periodic~\eqref{eq:Zak-quasi-periodicity} and
  symmetric~\eqref{eq:Zak_even_and_odd2} in the first variable, hence if
  $Z_\lambda h_n(x_0,\tfrac{1}{2})=0$, then~\eqref{eq:Zak_h_even_zeros} also
  holds. It suffices to show that $Z_\lambda h_n(x_0,\tfrac{1}{2})=0$ for
  $x_0 \in \itvoc{0}{1/4}$. The cases $x_0 \in \itvco{-1/4}{0}$ and $x_{0}=0$
  will follow by symmetry~\eqref{eq:Zak_even_and_odd2} and continuity,
  respectively.

 So, we assume $0 < x_0 \le 1/4$. We first show that $Z_\lambda h_n(x_0,\tfrac{1}{2})$ is
  positive for sufficiently large $\lambda>0$. First, we rewrite the
  series: 
\begin{align*}
  Z_\lambda h_n(x_0,\tfrac{1}{2}) &= \sum_{k\in\Z} (-1)^k h_n
  (\lambda(x_0+k)) \\
  &= \sum_{k=0}^\infty (-1)^k h_n (\lambda(x_0+k)) + \sum_{k=1}^\infty (-1)^k h_n (\lambda(x_0-k)) \\
&= \sum_{k=0}^\infty (-1)^k \bigl[h_n (\lambda(x_0+k)) - h_n(\lambda(1-x_0+k))\bigr]
\end{align*}
Now, pick $\lambda_1$ so that $\lambda_1 x_0 > \frac{1}{\sqrt{2 \pi}}
\sqrt{2n+1}$. 
Since $h_n$ is strictly convex on $\itvoo{\frac{1}{\sqrt{2 \pi}}
  \sqrt{2n+1}}{\infty}$, its derivative is monotonically
increasing to zero on the same interval. Note that $x_0+k<1-x_0+k$
since $x_0<1/2$.
Hence, by the mean value theorem, the sequence of positive numbers
\[ \seq{h_n (\lambda_1(x_0+k)) - h_n(\lambda_1(1-x_0+k))}_{k=0}^\infty \]
decreases monotonically to zero. It thereby follows that $Z_{\lambda_1}
h_n(x_0,\tfrac{1}{2}) > 0$
by the alternating series test. 

Let $\tilde{x}_1, \tilde{x}_2, \dots, \tilde{x}_{n/2}$ denote the positive zeros of $h_n$ in
descending order. Recall that these are related to the zeros $x_{k}$ of the Hermite
polynomial $H_{n}$ by $\tilde{x}_{k} = 1/\sqrt{2\pi} \, x_{k}$. Pick $\lambda_0$ so that
\[ \frac{n-2}{n-1} \tilde{x}_1 \le x_0 \lambda_0  < \tilde{x}_1 \]
We then claim that $Z_{\lambda_0} h_n(x_0,\tfrac{1}{2}) < 0$.
To see the claim, we first note that $h_n$ is negative on the interval
$\itvoo{\tilde{x}_2}{\tilde{x}_1}$.  Next, we rewrite the Zak transform as:
\begin{align}
  Z_{\lambda_0} h_n(x_0,\tfrac{1}{2})
  &= h_n(\lambda_0 x_0) + \sum_{k=1}^\infty (-1)^k h_n
    (\lambda_0(x_0+k)) +  \sum_{k=1}^\infty (-1)^k h_n
    (\lambda_0(x_0-k)) \nonumber \\
&= h_n(\lambda_0 x_0) + \sum_{k=1}^\infty (-1)^k h_n(\lambda_0(x_0+k)) +  \sum_{k=1}^\infty (-1)^k h_n(\lambda_0(-x_0+k))  \label{eq:n-even-Zak-negative-inf-series-split_1}
\end{align}

Suppose 
\begin{equation}
\lambda_0 (-x_0+1) \ge \frac{1}{\sqrt{2 \pi}}
\sqrt{2n+1}. \label{eq:lambda0-estimate_1}
\end{equation}
Then, since $x_0\ge 0$, we have $\lambda_0 (\pm x_0 +k) \ge \tfrac{1}{\sqrt{2 \pi}} 
\sqrt{2n+1}$ for $k \in \Z_{>0}$, and 
it follows again by convexity and positivity of $h_n$ on $\itvoo{\frac{1}{\sqrt{2 \pi}}
  \sqrt{2n+1}}{\infty}$ and the alternating series
test that the two series in
\eqref{eq:n-even-Zak-negative-inf-series-split_1} are negative as the
first term in both series is negative. Moreover, by convexity of the roots of the Hermite
polynomials, we have $\tilde{x}_2 \le \frac{n-2}{n-1} \tilde{x}_1$, and it follows
that also
$h_n(\lambda_0 x_0)$ is negative. Hence, to finish the proof of the claim, we only
have to show that~\eqref{eq:lambda0-estimate_1} holds. However,
by choice of $\lambda_0$, we have
\[
   \lambda_0 (-x_0+1) \ge (-1+1/x_0) \frac{n-2}{n-1} \tilde{x}_1 >
       \frac{1}{\sqrt{2 \pi}}  \frac{3^{3/2}}{2^{1/2}} \frac{n-2}{\sqrt{n+1}},
\]
where the last inequality follows by
Lemma~\ref{lem:x1-simple-bound} and by $x_0\le 1/4$. It is straightforward to verify
that $\tfrac{3^{3/2}}{2^{1/2}} \tfrac{n-2}{\sqrt{n+1}} \ge
\sqrt{2n+1}$ holds for $n \ge 4$. Thus, we conclude that
\eqref{eq:lambda0-estimate_1} holds for $n \ge 4$.
\end{proof}
                                                                                                                          
\begin{lemma}
    \label{lem:h_odd_zero}
  Let $n \ge 3$ be an odd integer. Suppose $x_0 \in \itvcc{-1/4}{1/4}+\Z$ is given. Then there exists a $\lambda > 0$
  so that $Z_\lambda h_n(x_0,0)=0$ and therefore
  \begin{equation*}
      Z_\lambda h_n(\pm x_0+k,\ell)=0 \quad \text{for all
  $k,\ell \in \Z$.}
  \end{equation*}
\end{lemma}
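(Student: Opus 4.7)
The plan is to mimic the structure of Lemma~\ref{lem:h_even_zero}: first reduce to $x_0\in\itvoc{0}{1/4}$, then exhibit a large $\lambda_1$ with $Z_{\lambda_1}h_n(x_0,0)>0$ and a smaller $\lambda_0$ with $Z_{\lambda_0}h_n(x_0,0)<0$, and invoke the intermediate value theorem applied to the continuous map $\lambda\mapsto Z_\lambda h_n(x_0,0)$. The case $x_0=0$ is already covered by Corollary~\ref{cor:Zak_even_odd_zeroes}\eqref{item:Zak-zeros-odd}, the case $x_0\in\itvco{-1/4}{0}$ follows from the symmetry~\eqref{eq:Zak_even_and_odd2}, and the full collection of zeros at $(\pm x_0+k,\ell)$ is then immediate from \eqref{eq:Zak_even_and_odd2} and quasi-periodicity.

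The key preparation is to exploit oddness of $h_n$ to rewrite
\[
\tfrac{1}{\sqrt{\lambda}}Z_\lambda h_n(x_0,0)=h_n(\lambda x_0)+\sum_{k=1}^\infty\bigl[h_n(\lambda(x_0+k))-h_n(\lambda(k-x_0))\bigr].
\]
For the positive sign we take $\lambda_1>T/x_0$, where $T=\sqrt{(2n+1)/(2\pi)}$ is the harmonic-oscillator turning point, so that every argument of $h_n$ above lies in the region where $h_n$ is positive and monotonically decreasing; regrouping the series as $\sum_{k\ge 0}\bigl[h_n(\lambda_1(x_0+k))-h_n(\lambda_1(k+1-x_0))\bigr]$ then displays a sum of strictly positive terms, since $x_0+k<k+1-x_0$ for $x_0<1/2$. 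For the negative sign we arrange $\lambda_0 x_0\in(\tilde{x}_2,\tilde{x}_1)$ (interpreted as $(0,\tilde{x}_1)$ when $n=3$) together with $\lambda_0(1-x_0)>T$: the first condition puts $h_n(\lambda_0 x_0)$ inside a negative bump, and the second forces every bracketed term with $k\ge 1$ to be negative, because both $\lambda_0(x_0+k)$ and $\lambda_0(k-x_0)$ then exceed $T$ while $\lambda_0(x_0+k)>\lambda_0(k-x_0)$.

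For odd $n\ge 5$ the construction from the even case transplants verbatim: take $\tfrac{n-2}{n-1}\tilde{x}_1\le\lambda_0 x_0<\tilde{x}_1$, use the convexity of the Hermite roots to place this in $(\tilde{x}_2,\tilde{x}_1)$, and derive $\lambda_0(1-x_0)\ge 3\tfrac{n-2}{n-1}\tilde{x}_1>T$ from $(1-x_0)/x_0\ge 3$ together with Lemma~\ref{lem:x1-simple-bound}; the driving algebraic inequality $27(n-2)^2>2(n+1)(2n+1)$ is exactly the one that already forced $n\ge 4$ in Lemma~\ref{lem:h_even_zero}, hence is harmless here. The step that requires extra care, and the main obstacle, is $n=3$: there the convexity-of-roots bound is vacuous ($H_3$ has only the single positive zero $x_1=\sqrt{3/2}$), and the algebraic inequality above in fact fails. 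The plan there is to observe that the two requirements on $\lambda_0$ are compatible precisely when the open interval $\itvoo{T/(1-x_0)}{\tilde{x}_1/x_0}$ is non-empty, i.e., when $x_0<\tilde{x}_1/(T+\tilde{x}_1)$. Substituting $\tilde{x}_1=\sqrt{3/(4\pi)}$ and $T=\sqrt{7/(2\pi)}$ gives the explicit threshold $\tfrac{1}{1+\sqrt{14/3}}$, which is comfortably above $1/4$, so the desired $\lambda_0$ exists for every $x_0\in\itvoc{0}{1/4}$ and the argument closes.
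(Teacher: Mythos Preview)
Your proof is correct and follows the same strategy as the paper: reduce to $x_0\in\itvoc{0}{1/4}$, obtain positivity for large $\lambda$ via the telescoped rewriting $\sum_{k\ge 0}\bigl[h_n(\lambda(x_0+k))-h_n(\lambda(k+1-x_0))\bigr]$ and monotone decay past the turning point, then obtain negativity for a $\lambda_0$ with $\lambda_0 x_0$ in the last negative arc of $h_n$ and $\lambda_0(1-x_0)>T$.

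The one genuine difference is your handling of $n=3$, and here you are actually more careful than the paper. The paper treats all odd $n\ge 3$ uniformly with the choice $\tfrac{n-2}{n-1}\tilde{x}_1\le\lambda_0 x_0<\tilde{x}_1$ and asserts that the resulting estimate $\tfrac{3^{3/2}}{2^{1/2}}\tfrac{n-2}{\sqrt{n+1}}\ge\sqrt{2n+1}$ holds for $n\ge 3$. As you note, this algebraic inequality fails at $n=3$ (the left side is $\tfrac{3\sqrt{6}}{4}\approx 1.84$, the right is $\sqrt{7}\approx 2.65$), so the paper's estimate does not close the case $n=3$ as written. Your direct compatibility check, namely that $\itvoo{T/(1-x_0)}{\tilde{x}_1/x_0}$ is non-empty precisely when $x_0<\tilde{x}_1/(T+\tilde{x}_1)=1/(1+\sqrt{14/3})\approx 0.317>1/4$, is a clean and correct repair. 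So your argument is essentially the paper's, with a small sharpening at $n=3$.
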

\begin{proof}
  As in the proof of Lemma~\ref{lem:h_even_zero}, it suffices to show that
  $Z_\lambda h_n(x_0,0)=0$ for $x_0 \in \itvoc{0}{1/4}$. So, we assume
  $0 < x_0 \le 1/4$. We first show that $Z_\lambda h_n(x_0,0)$ is positive for
  sufficiently large $\lambda>0$. Using that $h_n$ is an odd function, we
  rewrite the series:
\begin{align*}
  Z_\lambda h_n(x_0,0) &= \sum_{k\in\Z} h_n
  (\lambda(x_0+k)) \\
  &= \sum_{k=0}^\infty  h_n (\lambda(x_0+k)) + \sum_{k=1}^\infty h_n
    (\lambda(x_0-k)) \\
    &= \sum_{k=0}^\infty  h_n (\lambda(x_0+k)) - \sum_{k=0}^\infty h_n
      (-\lambda(x_0-k-1)) \\
    &= \sum_{k=0}^\infty  h_n (\lambda(x_0+k)) -  h_n (\lambda(1-x_0+k)).
\end{align*}
Pick $\lambda_1$ so that $\lambda_1 x_0 > \frac{1}{\sqrt{2 \pi}}
\sqrt{2n+1}$. Then $h_n$ is monotonically decreasing to zero on
$\itvoo{\lambda_1 x_0}{\infty}$, and it follows that $h_n
(\lambda(x_0+k)) -  h_n (\lambda(1-x_0+k)) > 0$ for all $k \in \Z_{\ge
0}$. We conclude that $Z_{\lambda_1} h_n(x_0,0) > 0$.

Let $\tilde{x}_1, \tilde{x}_2, \dots, \tilde{x}_{(n-1)/2}$ denote the positive
zeros of $h_n$ in descending order. Pick $\lambda_0$ so that
\[ \frac{n-2}{n-1} \tilde{x}_1 \le x_0 \lambda_0  < \tilde{x}_1 \]
We then claim that $Z_{\lambda_0} h_n(x_0,0) < 0$.
To see the claim, we first note that $h_n$ is negative on the interval
$\itvoo{\tilde{x}_2}{\tilde{x}_1}$.  Next, we rewrite the Zak transform as:
\begin{align}
  Z_{\lambda_0} h_n(x_0,0)
  &= h_n(\lambda_0 x_0) + \sum_{k=1}^\infty  h_n
    (\lambda_0(x_0+k)) +  h_n
    (\lambda_0(x_0-k)) \nonumber \\
&= h_n(\lambda_0 x_0) + \sum_{k=1}^\infty  h_n(\lambda_0(x_0+k)) - h_n(\lambda_0(-x_0+k))  \label{eq:n-even-Zak-negative-inf-series-split}
\end{align}

Note that $0<-x_0+k < x_0 + k$. Assuming
\begin{equation}
\lambda_0 (1-x_0) \ge \frac{1}{\sqrt{2 \pi}}
\sqrt{2n+1},\label{eq:lambda0-estimate}
\end{equation}
we have $\lambda_0 (\pm x_0 +k) \ge 1/\sqrt{2 \pi} \sqrt{2n+1}$ for
$k \in \Z_{>0}$. Since $h_n$ is monotonically decreasing to zero on
$\itvoo{\lambda_1 x_0}{\infty}$, it follows again by convexity and positivity of
$h_n$ on $\itvoo{\frac{1}{\sqrt{2 \pi}} \sqrt{2n+1}}{\infty}$ and the
alternating series test that the two series in
\eqref{eq:n-even-Zak-negative-inf-series-split} are negative as the first term
is negative. Moreover, by convexity of the roots of the Hermite polynomials, we
have $\tilde{x}_2 \le \frac{n-2}{n-1} \tilde{x}_1$, and it follows that also
$h_n(\lambda_0 x_0)$ is negative. Hence, to finish the proof of the claim, we
only have to show that~\eqref{eq:lambda0-estimate} holds. However, by choice of
$\lambda_0$, we have
\[
   \lambda_0 (1-x_0) \ge (1/x_0-1) \frac{n-2}{n-1} \tilde{x}_1 >
       \frac{1}{\sqrt{2 \pi}}  \frac{3^{3/2}}{2^{1/2}} \frac{n-2}{\sqrt{n+1}},
\]
where the last inequality follows by Lemma~\ref{lem:x1-simple-bound}. It is
straightforward to verify that
$\tfrac{3^{3/2}}{2^{1/2}} \tfrac{n-2}{\sqrt{n+1}} \ge \sqrt{2n+1}$ holds for
$n \ge 3$. Thus, we conclude that \eqref{eq:lambda0-estimate} holds for
$n \ge 3$.
\end{proof}

\section{Counterexamples}
\label{sec:counterexamples}

In the two next subsections we present counterexamples to the frame set
conjecture for Gabor systems $\mathcal{G}(h_{n},\alpha,\beta)$ for $n > 2$.
Recall that all counterexamples to the frame set conjecture for Gabor systems
$\mathcal{G}(h_{n},\alpha,\beta)$ will be found on hyperbolas
$\alpha \beta=1/2, \alpha \beta=1/3, \alpha \beta=1/4$ and $\alpha \beta=2/3$;
their precise location will, however, not be given. In
Section~\ref{sec:estim-locat-count}, we will show how to estimate the location
of the counterexamples. In the final subsection, Section~\ref{sec:numer-exper},
we will illustrate numerically that there are many more counterexamples than
what we prove the existence of. The Python code used in all the numerical
experiments is open-source and is hosted for public access on GitHub
at \url{https://github.com/jakoblem/gfsp}.

\subsection{Non-frame property for Hermite functions of even order}
\label{sec:non-frame-property-even}

We first present new counterexamples for even Hermite functions of degree four
or larger, that is, we will exhibit points $(\alpha, \beta)$, with
$\alpha\beta<1$ being rational, not belonging to the frame set
$\frameset{(h_{n})}$. So let $n\ge 4$ be an even integer.
  \begin{description}
    \item[On $\bm{\alpha\beta=1/2}$.] Lemma~\ref{lem:h_even_zero}
  with $x_{0}=1/4$ yields the existence of $\lambda_{1} > \sqrt{2}$ for which
  $Z_{\lambda_{1}}h_{n}(1/4,1/2)=0$. By
  Corollary~\ref{cor:obstr-frame-prop-even-and-odd}\eqref{item:cor-obsr-even} and Lemma~\ref{lem:frame-set-symmetry-h_n},
  we conclude that
  \begin{equation}
    (\lambda_{1}/2,1/\lambda_{1}),    (1/\lambda_{1},\lambda_{1}/2) \notin \frameset{(h_{n})}, \; \text{where
      $n \ge 4$ is even}.
    \label{eq:lambda_1-points}
 \end{equation}
  \item[On $\bm{\alpha\beta=p/3 \, (p=1,2)}$.] Lemma~\ref{lem:h_even_zero}
  with $x_{0}=1/6$ yields the existence of $\lambda_{2} > \sqrt{3}$ so that
  $Z_{\lambda_{2}}h_{n}(1/6,1/2)=0$. Then Theorem~\ref{thm:symm_modu_even_n} with $s^{2}=3$
  shows that also $Z_{3/\lambda_{2}}h_{n}(1/6,1/2)=0$. Hence, from
  Corollary~\ref{cor:obstr-frame-prop-even-and-odd}\eqref{item:cor-obsr-even}
  and Lemma~\ref{lem:frame-set-symmetry-h_n} we conclude that
  \begin{multline}
    (\lambda_{2}/3,1/\lambda_{2}), (2\lambda_{2}/3,1/\lambda_{2}), (1/\lambda_{2},\lambda_{2}/3), (1/\lambda_{2}, 2\lambda_{2}/3), \\ (2/\lambda_{2},\lambda_{2}/3), (\lambda_{2}/3, 2/\lambda_{2}) \notin \frameset{(h_{n})} ,\; \text{
      where $n \ge 4$ is even}. \label{eq:lambda_2-points}
  \end{multline}
  \end{description}

\subsection{Non-frame property for Hermite functions of odd order}
\label{sec:non-frame-property-odd}

We then turn to new counterexamples for odd Hermite functions of degree three or larger.
Let $n \ge 3$ be an odd integer.
\begin{description}
  \item[On $\bm{\alpha\beta=1/3}$.] Lemma~\ref{lem:h_odd_zero}
  with $x_{0}=1/6$ yields the existence of $\lambda_{3} > \sqrt{3}$ for which
  $Z_{\lambda_{3}}h_{n}(1/6,0)=0$. By
  Corollary~\ref{cor:obstr-frame-prop-even-and-odd}\eqref{item:cor-obsr-odd}and Lemma~\ref{lem:frame-set-symmetry-h_n}, it follows that
  \begin{equation}
    (\lambda_{3}/3,1/\lambda_{3}),    (1/\lambda_{3},\lambda_{3}/3) \notin \frameset{(h_{n})}, \; \text{
      where $n \ge 3$ is odd}.
    \label{eq:lambda_3-points}
 \end{equation}
  \item[On $\bm{\alpha\beta=1/4}$.] Lemma~\ref{lem:h_odd_zero}
  with $x_{0}=1/4$ yields the existence of $\lambda_{4} > 2$ for which
  $Z_{\lambda_{4}}h_{n}(1/4,0)=0$. Hence, from
  Corollary~\ref{cor:obstr-frame-prop-even-and-odd}\eqref{item:cor-obsr-odd} and Lemma~\ref{lem:frame-set-symmetry-h_n} we conclude that
  \begin{equation}
    (\lambda_{4}/4,1/\lambda_{4}), (1/\lambda_{4},\lambda_{4}/4)\notin \frameset{(h_{n})}, \; \text{where
      $n \ge 3$ is odd}.\label{eq:lambda_4-points}
  \end{equation}
\end{description}

\subsection{Bounds on the location of the counterexamples}
\label{sec:estim-locat-count}

The values of $\lambda_{i}, i=1,2,3,4$ in the counterexamples in equations \eqref{eq:lambda_1-points},
\eqref{eq:lambda_2-points}, \eqref{eq:lambda_3-points}, and
\eqref{eq:lambda_4-points} certainly depend on $n$. While we did not determine
the exact value of $\lambda_{i}$, we are, in fact, able to give lower and upper
bounds depending only on $n$. Following the proofs of
Lemma~\ref{lem:h_even_zero} and~\ref{lem:h_odd_zero}, we see that
\[
   x_{1} < \sqrt{2\pi} \, x_{0}\, \lambda_{i} < \sqrt{2n+1},
\]
where $x_{1}$ is the largest zero of the Hermite polynomial $H_{n}(x)$ and
$x_{0}$ is chosen in Lemma~\ref{lem:h_even_zero} and~\ref{lem:h_odd_zero}. Using
Lemma~\ref{lem:x1-simple-bound} we then arrive at
\begin{equation}
  \label{eq:lower-and-upper-bound-on-lambda}
   \frac{\sqrt{3/2}}{ x_{0}\sqrt{2\pi}} \frac{n-1}{\sqrt{n+1}} <\lambda_{i} < \frac{1}{x_{0}\sqrt{2\pi}} \sqrt{2n+1},
 \end{equation}
The bounds hold for \emph{even} $n\ge 4$ in case $i=1,2$ and for \emph{odd}
$n\ge 3$ in case $i=3,4$. Figure~\Ref{fig:lower_upper_bound} shows that, in
particular, the lower bound provides a good estimate of the true value of
$\lambda_{i}$.
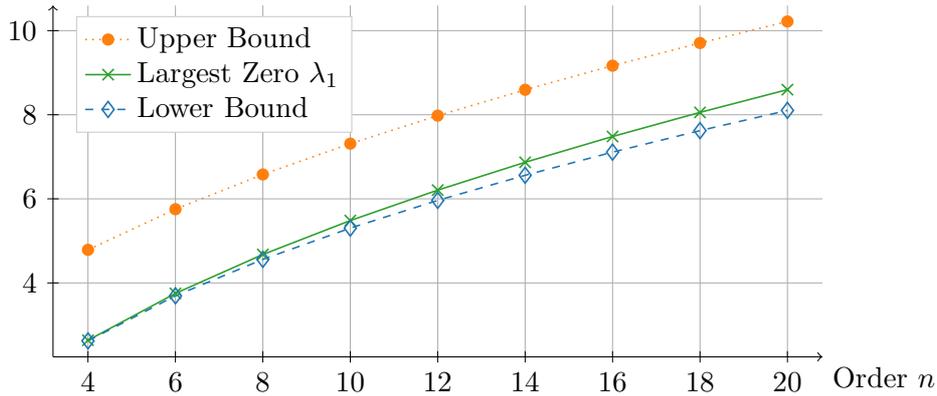
\begin{figure}
  \centering
\begin{tikzpicture}

\definecolor{darkgray176}{RGB}{176,176,176}
\definecolor{darkorange25512714}{RGB}{255,127,14}
\definecolor{forestgreen4416044}{RGB}{44,160,44}
\definecolor{lightgray204}{RGB}{204,204,204}
\definecolor{steelblue31119180}{RGB}{31,119,180}

\begin{axis}[
width=0.75\textwidth,
height=0.4\textwidth,
legend cell align={left},
legend style={
  fill opacity=0.8,
  draw opacity=1,
  text opacity=1,
  at={(0.03,0.97)},
  anchor=north west,
  draw=lightgray204
},
x grid style={darkgray176},
xlabel={Order $n$},
xmajorgrids,
xmin=3.2, xmax=20.8,
xtick style={color=black},
y grid style={darkgray176},
ymajorgrids,
ymin=2.24232664809701, ymax=10.5976975252246,
ytick style={color=black}
]
\addplot [semithick, darkorange25512714, dotted, mark=*, mark size=2, mark options={solid}]
table {%
4 4.78730736481719
6 5.75362739175159
8 6.57952464247954
10 7.31273279143145
12 7.97884560802865
14 8.59347971398312
16 9.16699568847508
18 9.70668461991024
20 10.2179079399006
};
\addlegendentry{Upper Bound}
\addplot [semithick, forestgreen4416044, mark=x, mark size=3, mark options={solid}]
table {%
4 2.63600358223107
6 3.75102328412274
8 4.67662070190268
10 5.48331661876521
12 6.20710288356161
14 6.86890611430174
16 7.48214481861465
18 8.05602339998763
20 8.59717564752081
};
\addlegendentry{Largest Zero $\lambda_{1}$}
\addplot [semithick, steelblue31119180, dashed, mark=diamond, mark size=3, mark options={solid}]
table {%
4 2.62211623342099
6 3.69348781844744
8 4.56029011109392
10 5.3034912118052
12 5.96261400303117
14 6.56014455725242
16 7.11021093712182
18 7.62233106106616
20 8.10325750767108
};
\addlegendentry{Lower Bound}
\end{axis}

\end{tikzpicture}

         \caption{Illustration of the lower and upper bounds in
   \eqref{eq:lower-and-upper-bound-on-lambda} for $i=1$ and $x_{0}=1/4$ as well
as the exact value of $\lambda_{1}$ (i.e., the largest value of $\lambda_{1}$ so that
$Z_{\lambda_{1}}h_{n}(1/4,1/2)=0$) for $n=4,6,\dots, 20$. The lower bound is a
direct consequence of Lemma~\ref{lem:x1-simple-bound}.}
\label{fig:lower_upper_bound}
\end{figure}

In \cite{Lemvigsome2017} the obstructions of the frame property of
$\mathcal{G}(h_{n},\alpha,\beta)$ for $n=2,3,6,7,\dots$ all occurred near
uniform sampling $\alpha=\beta$, in fact,
$\alpha,\beta \in \itvccs{1/2}{2/\sqrt{3}} \approx \itvcc{0.5}{1.15}$. The
bounds in \eqref{eq:lower-and-upper-bound-on-lambda} in combination with the
symmetry results in Section~\ref{sec:symmetries-par} show that for any $n\ge 3$
we have obstructions of the frame property of $\mathcal{G}(h_{n},\alpha,\beta)$
where $\alpha$ and $\beta$ grow (up to constants) as $n^{1/2}$ and $n^{-1/2}$
and vice versa.

\subsection{Numerical experiments}
\label{sec:numer-exper}

This paper concludes with a series of numerical experiments that illustrate the
complex characteristics of the Gabor frame sets of Hermite functions. The Python
code developed for these experiments is accessible on GitHub
at \url{https://github.com/jakoblem/gfsp}. We believe that this code will serve
as a valuable resource for researchers exploring the Gabor frame sets of
functions with even or odd symmetry. For those interested in replicating our
work, all the required files to perform the numerical experiments and to
recreate figures and tables found within this paper are also available on the
specified GitHub repository.

Let $g \in W(\R)$ be an even, continuous function. By
Corollary~\ref{cor:obstr-frame-prop-even-and-odd}\eqref{item:cor-obsr-even} any
zero of the function $\lambda \mapsto Z_{\lambda} g(x_{0},1/2)$ for $x_{0}=1/4$
or $x_{0}=1/6$ will correspond to a non-frame property of
$\mathcal{G}(g,\alpha,\beta)$ on the hyperbolas $\alpha \beta=1/2$ and
$\alpha \beta=p/3 \, (p=1,2)$, respectively. To be precise, the location is
determined by $\beta = 1/\lambda_{0}$, where $\lambda_{0}$ is a zero of
$\lambda \mapsto Z_{\lambda} g(x_{0},1/2)$. Part~\eqref{item:cor-obsr-odd} of
Corollary~\ref{cor:obstr-frame-prop-even-and-odd} can, similarly, be used to
prove the non-frame property of Gabor systems generated by odd functions, where
one is interested in zeros of $\lambda \mapsto Z_{\lambda} g(x_{0},0)$.

Figure~\ref{fig:thm_symmetry_even} in Section~\ref{sec:symmetries-par} shows the graph of
$\lambda \mapsto Z_{\lambda} h_{n}(1/6,1/2)$ and, thus, each zero corresponds to a non-frame
property on each of the hyperbolas $\alpha \beta=1/3$ and $\alpha \beta=2/3$.
Numerically we can easily find the zero of
$\kappa \mapsto Z_{\sqrt{3} 2^{\kappa}} h_{n}(1/6,1/2)$ using \texttt{fsolve} from,
e.g., Python's SciPy library.  For e.g., $n=8$, the zeros are:
\begin{equation*}
    \kappa_{\text{zeros}}=\begin{bmatrix}
        -2.01794767 & -1.45344028 & -0.67928838 &  0.67928838 &  1.45344028 &  2.01794767
    \end{bmatrix}
  \end{equation*}
which corresponds to the non-frame property of
$\mathcal{G}(h_{8},\alpha_{i},\beta_{i})$ for $i=1,\dots,6$ where
$\alpha_{i},\beta_{i}$ are given in Table~\ref{table:h8-nonframe-points}.
\begin{table}[h!]
\centering
\begin{tabular}{c |S[table-format=2.12] S[table-format=2.12] }
 \hline
  & $\alpha_{i}$ & $\beta_{i}$ \\
 \hline
  $i=1$ & 0.14255308  & 2.33831035  \\
  $i=2$ & 0.21081924   & 1.58113333 \\
  $i=3$ & 0.36053978   & 0.92453967  \\
  $i=4$ & 0.92453967   & 0.36053978  \\
  $i=5$ & 1.58113333   & 0.21081924 \\
  $i=6$ & 2.33831035  & 0.14255308 \\
\end{tabular}
\caption{Non-frame property of
$\mathcal{G}(h_{8},\alpha_{i},\beta_{i})$ for $i=1,\dots,6$, i.e.,
$(\alpha_{i},\beta_{i}) \notin \frameset{(h_{8})}$. Note that the points
$(2\alpha_{i},\beta_{i}), (\alpha_{i},2\beta_{i}),(\beta_{i},\alpha_{i}), (2\beta_{i}, \alpha_{i}), (\beta_{i}, 2\alpha_{i})  \notin \frameset{(h_{8})}$
also belong to the complement of $\frameset{(h_{8})}$.}
\label{table:h8-nonframe-points}
\end{table}

  Note that we numerically only need to find the positive zeros in the list
$\kappa_{\text{zeros}}$ since the function
$\kappa \mapsto Z_{\sqrt{3} 2^{\kappa}} h_{n}(1/6,1/2)$ is even by
Theorem~\Ref{thm:symm_modu_even_n}. More importantly, for $\kappa > 0$ we have
no issues with truncation errors that was apparent in
Figure~\ref{fig:thm_symmetry_even} for $\kappa < -4$, where the Zak transform was
approximated with a partial sum using 40 terms. Negative values of $\kappa$
correspond to small values of $\lambda$ in
$Z_{\lambda} h_{n}$~\eqref{eq:zakTransform} and will therefore eventually lead
to truncation errors, even for functions as $h_{n}$ with fast decay. For Hermite
functions this issue can be avoided using the symmetry results from Section~\ref{sec:symmetries-par}.

Returning to Figure~\ref{fig:thm_symmetry_even}, we see that the \emph{numbers} of
zeros of $\lambda \mapsto Z_{\lambda} h_{n}(1/6,1/2)$ increase with the order of
$n$. Counting the number of zeros on Figure~\ref{fig:thm_symmetry_even} we
conclude that, on each of the hyperbolas $\alpha \beta=p/3 \, (p=1,2)$, the
Gabor system $\mathcal{G}(h_{n},\alpha,\beta)$ fail to be a frame on at least
$1, 2, 5, 6$ locations for $n=2,4,6,8$, respectively. In the
Table~\ref{table:number-of-zeros-even} we
count\footnote{To be precise, we provide a Python function that automatically
  computes the number of zeros.} the number for zeros of
$\lambda \mapsto Z_{\lambda} h_{n}(x_{0},1/2)$ for all even orders below
$n < 22$ for $x_{0}=1/4$ and $x_{0}=1/6$, respectively. We see that the number
of zeros grows essentially as the order $n$ of the Hermite function.
\begin{table}[ht]
\centering
\begin{tabular}{l|*{11}{r}}
\hfill $n=$                & 0 & 2 & 4 & 6 & 8 & 10 & 12 & 14 & 16 & 18 & 20\\
\hline
$x_{0}=1/4, \gamma_{0}=1/2$ & 0 & 1 & 2 & 3 & 6 & 9 & 10 & 15 & 16 & 17 & 16 \\
$x_{0}=1/6, \gamma_{0}=1/2$ & 0 & 1 & 2 & 5 &  6 & 7 & 10  & 13 & 18 & 19 & 20 \\
\end{tabular}
\caption{Number of zeros of
  $\lambda \mapsto Z_{\lambda} h_{n}(x_{0},\gamma_{0})$ for Hermite functions of
  even order.}
\label{table:number-of-zeros-even}
\end{table}
Table~\ref{table:number-of-zeros-odd} shows a similar picture with the number for zeros of
$\lambda \mapsto Z_{\lambda} h_{n}(x_{0},0)$ for all odd orders below
$n < 22$ for $x_{0}=1/4$ and $x_{0}=1/6$, respectively.
\begin{table}[ht]
\centering
\begin{tabular}{l|*{11}{r}}
\hfill $n=$                & 1 & 3 & 5 & 7 & 9 & 11 & 13 & 15 & 17 & 19 & 21\\
\hline
$x_{0}=1/6, \gamma_{0}=0$ & 0 & 1 & 2 & 5 & 6 & 7 & 8 & 13 & 12 & 15 & 20 \\
$x_{0}=1/4, \gamma_{0}=0$ & 0 & 1 & 4 & 5 &  6 & 9 & 10  & 11 & 16 & 15 & 18 \\
\end{tabular}
\caption{Number of zeros of
  $\lambda \mapsto Z_{\lambda} h_{n}(x_{0},\gamma_{0})$  for Hermite functions of
  odd order.}
\label{table:number-of-zeros-odd}
\end{table}

For each of the zeros of found in Table~\ref{table:number-of-zeros-even}
and~\ref{table:number-of-zeros-odd} one can use symmetry properties as in
Sections~\ref{sec:non-frame-property-even} and \ref{sec:non-frame-property-odd},
respectively,
to extend the number of $(\alpha,\beta)$-points belonging to the complement of
the frame set $\frameset{(h_{n})}$.

\section*{Acknowledgements}
The authors would like to thank the reviewers for their valuable feedback and insightful suggestions. We are especially grateful to one reviewer for drawing our attention to the important references \cite{WOS:A1983QC19800014,WOS:A1981LU63700026}


\end{document}